\documentclass[12pt]{amsart}
\usepackage{amsmath, amssymb}
\usepackage{mathrsfs}
\newcommand{\no}[1]{#1}
\renewcommand{\no}[1]{}
\no{\usepackage{times}\usepackage[subscriptcorrection, slantedGreek, nofontinfo]{mtpro}
\renewcommand{\Delta}{\upDelta}}
\usepackage{color}
\usepackage{enumerate,comment}

 \setlength{\marginparwidth}{0.6in}
\date{\today}
\setlength{\oddsidemargin}{0.0in}
\setlength{\evensidemargin}{0.0in}
\setlength{\textwidth}{6.5in}
\setlength{\topmargin}{0.0in}
\setlength{\textheight}{8.5in}

\newtheorem{theorem}{Theorem}[section]
\newtheorem{prop}{Proposition}[section]
\newtheorem{lem}{Lemma}[section]

\newtheorem{corollary}{Corollary}[section]

\theoremstyle{remark}

%
\newcommand{\bel}{\begin{equation} \label}
\newcommand{\ee}{\end{equation}}

\newcommand{\pd}{\partial}

\newcommand{\R}{{\mathbb R}}
\newcommand{\N}{{\mathbb N}}

\def\phi {\varphi}
\def\epsilon {\varepsilon}

\renewcommand{\leq}{\leqslant}
\renewcommand{\geq}{\geqslant}

\def\beq{\begin{equation}}
\def\eeq{\end{equation}}
\newcommand{\bea}{\begin{eqnarray}}
\newcommand{\eea}{\end{eqnarray}}
\newcommand{\beas}{\begin{eqnarray*}}
\newcommand{\eeas}{\end{eqnarray*}}

\providecommand{\norm}[1]{\left\lVert#1\right\rVert}

%

\numberwithin{equation}{section}


\title[Rigidity of inverse problems for semlinear elliptic equations]{Rigidity of inverse problems for nonlinear elliptic equations on manifolds}


\begin{document}
\author[Feizmohammadi]{Ali Feizmohammadi}

\address{A. Feizmohammadi, Department of mathematics, University of Toronto, Mississauga, ON L5L 1C6, Canada}

\email{ali.feizmohammadi@utoronto.ca}

\author[Kian]{Yavar Kian}

\address
        {Y. Kian, Aix Marseille Univ\\ 
        Universit\'e de Toulon, CNRS\\
        CPT, Marseille, France}

\email{yavar.kian@univ-amu.fr}

\author[Oksanen]{Lauri Oksanen}

\address
        {L. Oksanen, Department of Mathematics and Statistics, University of Helsinki,\\
				P.O 68, 00014 University of Helsinki, Finland}

\email{lauri.oksanen@helsinki.fi}
\begin{abstract}

We consider the inverse problem of determining coefficients appearing in semilinear elliptic equations stated on Riemannian manifolds with boundary given the knowledge of the associated Dirichlet-to-Neumann map. We begin with a negative answer to this problem. Owing to this obstruction, we consider a new formulation of our inverse problem in terms of a rigidity problem. Precisely, we consider cases where the Dirichlet-to-Neumann map of a semilinear equation coincides with the one of a linear equation and ask whether this implies that the equation must indeed be linear. We give a positive answer to this rigidity problem under some assumptions imposed to the Riemannian manifold and to the semilinear term under consideration.

\medskip
\noindent

\end{abstract}

\maketitle


\section{Introduction}
Let $(M,g)$ be a smooth compact and connected Riemannian manifold of dimension $n\geq2$ with boundary and fix $\alpha\in(0,1)$.  We consider the semilinear elliptic boundary value problem
\bel{eq1}
\left\{
\begin{array}{ll}
-\Delta_g u+a(x,u)=0  & \mbox{in}\ M ,\\
u= f&\mbox{on}\ \partial M,
\end{array}
\right.
\ee
with $a\in \mathcal C^2(M\times\R)$ subject to any Dirichlet data $f\in\mathcal C^{2+\alpha}(\partial M)$. In general, existence and uniqueness of solutions to equation \eqref{eq1} is not guaranteed. A typical assumption that makes the latter boundary value problem globally well-posed is to assume that the nonlinear term $a$ satisfies the conditions
\bel{cond1} a(x,0)=0,\quad x\in M,\ee
\bel{cond3}\partial_\mu a(x,\mu)\geq0,\quad x\in M,\ \mu\in\R.\ee
 Under this assumption, equation \eqref{eq1} admits a unique solution $u\in C^{2+\alpha}(M)$. Moreover, the dependence of $u$ on $f$ is continuous. For the sake of motivating our main results, let us assume for the remainder of this section that \eqref{cond1} and \eqref{cond3} are satisfied. We define the Dirichlet-to-Neumann map associated to \eqref{eq1} via
$$ \Lambda_a(f) = \pd_\nu u|_{\pd M},$$
where $\nu$ is the exterior normal vector field on $\pd M$. The central topic of this paper is the following inverse problem:
\medskip

\noindent{\bf IP:} \textit{Does the knowledge of $\Lambda_a$ uniquely determine $a$ on $M\times \R$? In other words, if $a_1,a_2\in \mathcal C^{2}(M\times \R)$ satisfy \eqref{cond1}, \eqref{cond3} and if $\Lambda_{a_1}=\Lambda_{a_2}$, does it follow that $a_1=a_2$ on $M\times \R$?} \\

The inverse problem IP can be seen as the determination of the nonlinear law for problems of minimization of nonlinear energy or some class of reaction diffusion equations with stationary solutions. This problem has received considerable attention among the mathematical community. One of the first results about the determination of this class of nonlinear terms goes back the work of \cite{IS} when $n=3$ and  \cite{IN} when $n=2$. Some extended results have been obtained by \cite{Is1,Is2} for semilinear terms depending only on the solutions. We mention also the recent development of problem IP for nonlinear terms $a$ subjected to the analyticity of the map $u\mapsto a(\cdot,u)$ where the authors used the higher order linearization technique, initiated by \cite{KLU}, for solving the inverse problem IP by exploiting  the nonlinear interaction. In that category of results, without being exhaustive, we can mention the works of \cite{FO20,KU0,KU,LLLS}.

In this article we will consider the determination of general semilinear terms without the analyticity condition considered by \cite{FO20,KU0,KU,LLLS}. Namely, in a similar way to \cite{IN,IS}, we consider the problem IP for general semilinear terms $a\in \mathcal C^2(M\times\R)$  only subjected to conditions that guarantee the well-posedness of \eqref{eq1}. In order to discuss this problem, let us first introduce the following notations.  We write $u(x;f)$ for the solution to \eqref{eq1} subject to Dirichlet data $f$ and define 
 
 \begin{equation}
 	\label{u_+}
 	u_\star(x)=\inf \{ u(x;f)\,:\, f\in C^{2+\alpha}(\pd M)\}\quad x\in M,
 \end{equation}
and 
\begin{equation}
	\label{u^+}
	u^\star(x)=\sup \{ u(x;f)\,:\, f\in C^{2+\alpha}(\pd M)\}\quad x\in M.
\end{equation}
We also define
\begin{equation}
	\label{E}
	E= \{(x,\mu)\,:\, x\in M,\quad \mu\in (u_\star(x),u^\star(x))\}.
\end{equation}

When $(M,g)$ is a Euclidean domain, we recall the following result that is due to Isakov and Sylvester \cite{IS} when $n=3$ and to Isakov and Nachman \cite{IN} when $n=2$ (we have relaxed the regularity assumption on $a$ here).  

\begin{theorem}[see \cite{IN,IS}]
Let $(M,g)$ be a bounded Euclidean domain with a smooth boundary. Let $a_1,a_2\in \mathcal C^2(M)$ satisfy \eqref{cond1} and \eqref{cond3}. If, 
$$ \Lambda_{a_1}(f) = \Lambda_{a_2}(f) \quad \forall\, f\in C^{2+\alpha}(\pd M),$$
it follows that $E_1=E_2=:E$, and that 
\begin{equation}\label{result}
	a_1(x,\mu)=a_2(x,\mu), \quad \forall\, (x,\mu)\in E.
\end{equation}
\end{theorem}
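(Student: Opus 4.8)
The plan is to reduce the problem to the linear inverse problem for the Schr\"odinger equation via the first linearization of the Dirichlet-to-Neumann map, and then to upgrade the resulting equality of linearized potentials to an equality of the full nonlinear solutions by a differentiation-in-the-boundary-data argument. Throughout I would exploit that, under \eqref{cond1} and \eqref{cond3}, the problem \eqref{eq1} is well posed and the solution map $f \mapsto u_i(\cdot;f)$ is $C^1$; here \eqref{cond3} is what guarantees that the linearized operator $-\Delta_g + \partial_\mu a_i(x,u_i(\cdot;f))$ has trivial kernel, so that the implicit function theorem applies and, moreover, the linear Dirichlet problem for this operator is uniquely solvable.

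First I would record the linearization. Differentiating the solution map, the Fr\'echet derivative $D\Lambda_{a_i}(f)$ is exactly the Dirichlet-to-Neumann map of the linear Schr\"odinger operator $-\Delta_g + q_i^f$ with potential $q_i^f(x) = \partial_\mu a_i(x,u_i(x;f))$. Since $\Lambda_{a_1}\equiv\Lambda_{a_2}$, differentiating yields $D\Lambda_{a_1}(f)=D\Lambda_{a_2}(f)$ for every admissible $f$, so the two linear Dirichlet-to-Neumann maps coincide. Because $(M,g)$ is a Euclidean domain, the classical solution of the linear inverse problem (Sylvester--Uhlmann for $n\geq3$ and Bukhgeim in the plane, as used already in \cite{IS,IN}) applies and gives
\begin{equation}\label{star}
\partial_\mu a_1(x,u_1(x;f)) = \partial_\mu a_2(x,u_2(x;f)), \qquad x\in M,
\end{equation}
for every $f$. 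This is precisely the step that forces the restriction to Euclidean domains, since the analogous linear uniqueness statement is not available on a general manifold.

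The main obstacle is that \eqref{star} couples $a_1$ and $a_2$ at the a priori distinct profiles $u_1(\cdot;f)$ and $u_2(\cdot;f)$, so I must first show these profiles agree. I would do this along rays: fix $f$, set $u_i(t):=u_i(\cdot;tf)$ for $t\in[0,1]$, and let $v_i(t)=\tfrac{d}{dt}u_i(t)$. Differentiating the equation in $t$ shows that $v_i(t)$ solves $-\Delta_g v_i(t) + \partial_\mu a_i(x,u_i(t))\,v_i(t)=0$ with $v_i(t)=f$ on $\partial M$. By \eqref{star} applied to the datum $tf$, the two potentials $\partial_\mu a_1(\cdot,u_1(t))$ and $\partial_\mu a_2(\cdot,u_2(t))$ are the same function on $M$; hence $v_1(t)$ and $v_2(t)$ solve one and the same linear Dirichlet problem, which (again by \eqref{cond3}, so that the potential is nonnegative and the problem is uniquely solvable) forces $v_1(t)=v_2(t)$. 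Since $u_1(0)=u_2(0)=0$ by \eqref{cond1}, integrating in $t$ yields $u_1(\cdot;f)=u_2(\cdot;f)$ for every $f$.

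With the solutions identified, write $u(\cdot;f):=u_1(\cdot;f)=u_2(\cdot;f)$. In particular $u_\star$ and $u^\star$ are the same for both equations, so $E_1=E_2=E$. Now \eqref{star} reads $\partial_\mu a_1(x,u(x;f))=\partial_\mu a_2(x,u(x;f))$. For a fixed interior point $x$, the set of attained values $\{u(x;f):f\in C^{2+\alpha}(\partial M)\}$ is the continuous image of a connected set, hence an interval whose endpoints are, by definition, $u_\star(x)$ and $u^\star(x)$; consequently it contains $(u_\star(x),u^\star(x))$. Therefore $\partial_\mu a_1(x,\mu)=\partial_\mu a_2(x,\mu)$ for all $(x,\mu)\in E$. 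Finally, since $u\equiv 0$ is the solution for $f=0$, the value $\mu=0$ lies in the closure of each slice and $a_1(x,0)=a_2(x,0)=0$ by \eqref{cond1}; integrating the common derivative in $\mu$ from $0$ gives $a_1(x,\mu)=a_2(x,\mu)$ on $E$, which is \eqref{result}.
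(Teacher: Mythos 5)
Your proof is correct, and it reconstructs essentially the argument of the works this theorem is quoted from (the paper itself gives no proof, attributing the result to \cite{IN,IS}): first linearization of the Dirichlet-to-Neumann map, uniqueness for the resulting linear Schr\"odinger problem (applicable since $\partial_\mu a_i\geq 0$ by \eqref{cond3}), identification of the two nonlinear solution families by the ray-differentiation argument, and recovery of $a_1=a_2$ on $E$ by integrating the matched $\mu$-derivatives from $\mu=0$. The only (attributional, not mathematical) caveat is that in dimension two the linear uniqueness exploited in \cite{IN} comes from reducing the nonnegative-potential Schr\"odinger equation to a conductivity equation and invoking Nachman's two-dimensional result, rather than Bukhgeim's later theorem, though either suffices for your linearization step.
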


The latter theorem establishes equality of $a_1$ and $a_2$ on the \textit{a priori} unknown and abstract set $E$. This is of course the best result that one may hope for, but the following question arises: To what an extent can we describe the set of interior values of solutions to \eqref{eq1} at a fixed point $x$ in the interior of the manifold $M$? In \cite{IN,IS} it was proved that assuming the stronger assumption 
$$ \sup_{\mu \in \R} |\pd_\mu a(x,\mu)| \in L^p(M), \quad \text{for some $p>1$},$$
we have $E= M\times \R$. Our first result shows that the global Lipschitz assumption on $a$ may in fact be close to optimal, in order to achieve $E=M\times \R$. Precisely, we prove the following theorem in Section~\ref{sec_comp}.

\begin{theorem}
	\label{comp_thm}
	Let $(M,g)$ be a bounded Euclidean domain in $\R^n$, $n\geq 2$, with a smooth boundary. Let $a\in \mathcal C^2(M\times \R)$ satisfy \eqref{cond1} and \eqref{cond3}. Assume also that 
	\begin{equation}
		\label{a_bound_1}
		F(\mu)\leq a(x,\mu), \quad \forall (x,\mu)\in M\times [0,\infty),
	\end{equation}
	and 
	\begin{equation}
		\label{a_bound_2}
		a(x,\mu) \leq F(\mu) \quad \forall (x,\mu)\in M\times (-\infty,0],
	\end{equation}
	for some function $F\in \mathcal C^2(\R)$ that satisfies
	\begin{equation}\label{F_1} F(0)=0,\quad \text{and}\quad F'(\mu)\geq 0 \quad \forall\, \mu \in \R,\end{equation}
	and 
	\begin{equation}\label{F_2}|F(\mu)| \geq \delta\,|\mu|^{1+\epsilon}\quad \text{for some $\delta,\epsilon>0$.}
	\end{equation}
	Let $h>0$ and define $ U_h= \{x\in M\,:\, \textrm{dist}(x,\pd M)\geq h\}.$
	Then, 
	\begin{equation}\label{up} |u(x;f)| \leq C_h, \qquad \forall\, f\in C^{2+\alpha}(\pd M),\end{equation}
	for some constant $C_h>0$ that is independent of $f$. Here, $u(x;f)$ is the solution to \eqref{eq1} subject to Dirichlet data $f$.
\end{theorem}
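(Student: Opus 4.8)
The plan is to establish \eqref{up} as an interior a priori bound of Keller--Osserman type: the superlinear absorption encoded in \eqref{F_2} forces every solution of \eqref{eq1} to be controlled in the interior by a universal barrier, irrespective of its (possibly large) boundary values. Fix $x_0\in U_h$; since $\mathrm{dist}(x_0,\pd M)\geq h$, the open ball $B:=B_h(x_0)$ is contained in $M$, and because $u(\cdot;f)\in C^{2+\alpha}(M)$ with $M$ compact, $u(\cdot;f)$ is bounded on $\overline B$. It therefore suffices to bound $|u(x_0;f)|$ by a constant depending only on $h,n,\delta,\epsilon$. I will produce a radial supersolution $w$ on $B$ that blows up on $\pd B$ and dominates $u$ from above, and a symmetric subsolution bounding $u$ from below.

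For the upper bound, set $r=|x-x_0|$, $s=h^2-r^2$, and try the ansatz $w(x)=W(r):=A\,s^{-\beta}$ with $\beta=2/\epsilon$ and $A>0$ to be fixed. A direct computation gives
\[
 \Delta w = 2A\beta n\,s^{-\beta-1}+4A\beta(\beta+1)r^2\,s^{-\beta-2},
\]
which is smooth on $B$ (the factor $r$ appearing in $W'$ removes the singularity at $x_0$) and satisfies $w\to+\infty$ as $x\to\pd B$. Since $w\geq0$, hypotheses \eqref{a_bound_1} and \eqref{F_2} give $a(x,w)\geq F(w)\geq\delta\,w^{1+\epsilon}=\delta A^{1+\epsilon}s^{-\beta-2}$, using $\beta(1+\epsilon)=\beta+2$. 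Bounding the two terms of $\Delta w$ via $r^2\leq h^2$ and $s\leq h^2$ (so that $s^{-\beta-1}\leq h^2 s^{-\beta-2}$) one obtains
\[
 \Delta w \leq 2A\beta h^2\big(n+2(\beta+1)\big)\,s^{-\beta-2}.
\]
Choosing $A$ so large that $\delta A^{\epsilon}>2\beta h^2\big(n+2(\beta+1)\big)$ yields the strict inequality $-\Delta w+a(x,w)\geq-\Delta w+F(w)>0$ on $B$; that is, $w$ is a strict supersolution of \eqref{eq1}.

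Comparison then gives $u\leq w$ on $B$. Indeed, $\phi:=u-w$ is smooth on $B$ and, since $u$ is bounded on $\overline B$ while $w\to+\infty$ on $\pd B$, we have $\phi\to-\infty$ at $\pd B$; hence if $\sup_B\phi>0$ it is attained at an interior point $x_\ast$, where $\nabla\phi(x_\ast)=0$ and $\Delta\phi(x_\ast)\leq0$. Subtracting the (in)equations and writing $a(x,u)-a(x,w)=c(x)\phi$ with $c(x)=\pd_\mu a(x,\xi(x))\geq0$ by \eqref{cond3}, one obtains $-\Delta\phi+c\phi<0$ on $B$, which at $x_\ast$ contradicts $-\Delta\phi(x_\ast)\geq0$ and $c(x_\ast)\phi(x_\ast)\geq0$. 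Thus $\sup_B\phi\leq0$, and in particular $u(x_0;f)\leq w(x_0)=A\,h^{-2\beta}=:C_h$. For the lower bound I would run the symmetric argument with $\underline w:=-w$: since $\underline w\leq0$, \eqref{a_bound_2} together with \eqref{F_1}--\eqref{F_2} gives $a(x,\underline w)\leq F(\underline w)\leq\Delta\underline w$ for the same range of $A$, so $\underline w$ is a strict subsolution of \eqref{eq1} with $\underline w\to-\infty$ on $\pd B$; applying the maximum-principle argument above to $\underline w-u$ yields $u\geq\underline w$ on $B$, whence $u(x_0;f)\geq\underline w(x_0)=-C_h$. Combining the two bounds gives \eqref{up}.

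The part requiring the most care is the comparison against infinite boundary data: the naive maximum principle only gives a nonstrict conclusion, so it is essential that the barrier be a \emph{strict} supersolution (taking $A$ strictly above the threshold), which turns the interior-maximum analysis into a genuine contradiction and removes the need for a limiting argument over shrinking balls. The structural reason the construction succeeds is the Keller--Osserman integrability $\int^\infty \mathrm dt/\sqrt{\tilde F(t)}<\infty$, where $\tilde F'=F$, which is guaranteed by the growth \eqref{F_2}; correspondingly, the exponent $\beta=2/\epsilon$ is exactly what matches the singularity orders of $\Delta w$ and $w^{1+\epsilon}$ at $\pd B$.
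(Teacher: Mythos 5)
Your argument is correct, and it reaches \eqref{up} by a genuinely different mechanism than the paper's proof. Both arguments localize to a ball $B_h(x_0)\subset M$ and both ultimately rest on maximum-principle comparison using the monotonicity \eqref{cond3}, but the paper's quantitative input comes from actual solutions: it first squeezes $u$ between the solutions $u_\pm$ of the $a$-equation with constant boundary data $\pm\lambda$, then (via \eqref{a_bound_1}--\eqref{a_bound_2}) between the solutions $v_\pm$ of the $F$-equation with the same constant data, and finally bounds $v_\pm$ at the center uniformly in $\lambda$ by exploiting radial symmetry and an ODE energy/separation-of-variables estimate, where the superlinearity \eqref{F_2} enters through the convergence of the Keller--Osserman integral $\int_1^\infty (s^{2+\epsilon}-1)^{-1/2}\,ds$. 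You bypass all of this by exhibiting the explicit singular barrier $w=A(h^2-r^2)^{-2/\epsilon}$, verifying by direct computation that it is a strict supersolution once $\delta A^{\epsilon}>2\beta h^2\bigl(n+2(\beta+1)\bigr)$ with $\beta=2/\epsilon$, and concluding with a single comparison; your computation of $\Delta w$, the exponent bookkeeping $\beta(1+\epsilon)=\beta+2$, and the sign conventions in the subsolution step for $-w$ all check out, and your observation that \emph{strictness} of the differential inequality substitutes for a comparison theorem against infinite boundary data is exactly the right technical point. Your route is more self-contained: it needs no existence theory for the auxiliary constant-data problems, no reduction to a radial ODE, and it yields an explicit constant $C_h=Ah^{-4/\epsilon}$ depending only on $n,\delta,\epsilon,h$. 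What the paper's route buys is that the uniform bound is proved for genuine solutions of the $F$-problem with arbitrarily large constant data --- objects directly tied to the attainable set \eqref{E} --- and its ODE analysis displays the Keller--Osserman integrability, rather than the specific power shape of the barrier, as the source of the obstruction; but as stated both proofs use precisely the hypotheses \eqref{F_1}--\eqref{F_2} and \eqref{a_bound_1}--\eqref{a_bound_2}, so neither is more general than the other.
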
 
Note, for example, that for any integer $p>1$ and any $\{b_k\}_{k=1}^p \subset \mathcal C^2(M;(0,\infty))$, the function
$$ a(x,\mu) = \sum_{k=1}^{p} b_k(x) \mu^{2k-1},$$
satisfies the hypotheses of Theorem~\ref{comp_thm}. To the best of our knowledge, Theorem \ref{comp_thm}  gives the first negative answer to the inverse problem IP for some general class of semilinear terms $a$. Such obstruction was mentioned in \cite{IS}, but the authors considered only a counterexample on an interval in dimension 1 and for a specific power type nonlinearity. Not only Theorem \ref{comp_thm} describes an obstruction to problem IP on arbitrary domains in any dimension $n\geq2$ but it can also be applied to a general class of semilinear terms subjected to the conditions \eqref{a_bound_1}--\eqref{a_bound_2}.

 In order to explain why Theorem \ref{comp_thm}  gives a negative answer to the inverse problem IP, let us denote by Int$(M)$ the interior of the manifold $M$, and for any $x\in$Int$(M)$ let us consider $h_x=\textrm{dist}(x,\pd M)$. Now consider $a_j\in \mathcal C^2(M\times \R)$, $j=1,2$, with $a_1\neq a_2$, satisfying the condition \eqref{a_bound_1}--\eqref{a_bound_2} for $a=a_j$ and $F_\pm$ satisfying the conditions \eqref{F_1}-\eqref{F_2}. Fix also, for any $x\in$Int$(M)$, $C_x$ the constant $C_h$ of the estimate \eqref{up} with $h=h_x$, and assume that 
$$a_1(x,\mu)=a_2(x,\mu),\quad (x,\mu)\in \pd M\times\R\textrm{ or } x\in\textrm{Int}(M),\ \mu\in[-C_x,C_x].$$
Then, estimate \eqref{up} implies that for any $u\in C^{2+\alpha}( M)$ we have
$$(-\Delta u(x)+a_1(x,u(x))=0,\ x\in M)\Longleftrightarrow(-\Delta u(x)+a_2(x,u(x))=0,\ x\in M).$$
As a consequence of this property we have $\Lambda_{a_1} = \Lambda_{a_2}$ but $a_1\neq a_2$. Therefore, Theorem \ref{comp_thm} introduces an obstruction to the determination of a general semilinear terms from boundary data and it confirms that there is no hope to solve inverse problem IP for some large class of semilinear terms.

Owing to this obstruction, a natural follow up direction is to try to recover certain information about the nonlinear term. One example is a rigidity question that can be posed as follows:\\
\noindent{\bf IP1:} \textit{If the boundary data $\Lambda_a$ coincides with the one of a linear term $\tilde{a}$, does it mean that $a$ is linear?}\\

The above rigidity problem will indeed be the second main topic of this article (in addition to the general obstruction for the determination of a semilinear term exhibited by Theorem \ref{comp_thm}). We will focus our attention on the resolution of the problem IP1 with different class of  assumptions imposed to the manifold $(M,g)$ or the nonlinearity $a(x,u)$. 

Before closing this section, we mention a few examples of rigidity results in the context of inverse problems for linear PDEs. For instance, in the context of the Electrical Impedance Tomography (EIT), the well-known Calder\'{o}n problem asks whether it is possible to determine the conductivity of a medium from boundary measurements of electrical voltage and current flux. If the conductivity is assumed to be isotropic, the first result in this direction was a rigidity result due to Calder\'{o}n \cite{Calderon}. If the conductivity is anisotropic, analogous rigidity results are also available, see \cite{CF,KLS,Shar}.

\section{Main results for problem IP1}
In this article we will study the inverse problem IP1 under different assumptions imposed to the manifold $(M,g)$.
 Assuming that the condition \eqref{cond1} and
\bel{cond2}v\in H^1_0(M),\  \int_M\left\langle \nabla^gv,\nabla^gv\right\rangle_gdV_g(x)+ \int_M\partial_\mu a(x,0)|v|^2dV_g(x)=0\Longrightarrow v\equiv0\ee
are fulfilled, we prove in Proposition \ref{p1} that problem \eqref{eq1} is well-posed for sufficiently small Dirichlet data $f$. Namely, there exists $\lambda>0$ depending on $a$ and $(M,g)$ such that for all $f\in B_{\lambda}:=\{f\in C^{2+\alpha}(\partial M):\ \norm{f}_{ C^{2+\alpha}(\partial M)}<\lambda\}$ the problem \eqref{eq1} admits a unique solution $u_f\in C^{2+\alpha}( M)$ such that the map $f\mapsto u_f$ is continuous from $B_{\lambda}$ to $C^{2+\alpha}( M)$. Therefore, we can set
\bel{lambda}(0,+\infty]\ni\lambda_1=\sup\{\lambda>0:\ \textrm{for all $f\in B_{\lambda}$ problem \eqref{eq1} is well-posed in $ C^{2+\alpha}( M)$}\}\ee
and we can define the DN map associated to \eqref{eq1} as follows
$$\Lambda_a:B_{\lambda_1}\ni f\mapsto \partial_\nu u|_{\partial M}.$$
Note that here we have $B_{+\infty}=C^{2+\alpha}(\partial M)$. We will state our results for this class of nonlinear terms.

We start by considering a first result stated with a general Riemannian manifold for nonlinear terms satisfying the conditions \eqref{cond1}-\eqref{cond3}. Note that for this class of nonlinear terms $\lambda_1=+\infty$ and for all $f\in C^{2+\alpha}(\partial M)$ problem \eqref{eq1} will be well-posed and the DN map $\Lambda_a$ can be defined on $C^{2+\alpha}(\partial M)$ or equivalently on $B_{+\infty}$. For this class of nonlinear terms we consider the DN map $\Lambda_a$ as a map defined on $C^{2+\alpha}(\partial M)$ and we can prove the following result.

\begin{theorem}
\label{t1}
Let $a\in \mathcal C^2(M\times\R)$ satisfy condition \eqref{cond1} and \eqref{cond3}. Consider also $f\in C^{2+\alpha}(\partial M)$
a Dirichlet data satisfying 
\bel{t1a}\inf_{x\in \partial M}f(x)=c>0\ee
and the family $(h_\mu)_{\mu\in\R}$ of elements $h_\mu\in C^{2+\alpha}(\partial M)$.
Then, for any $\mu_1>0$, the condition 
\bel{t1b} \Lambda_a(\mu f)=\Lambda_0(h_\mu),\quad \mu\in(-\mu_1,\mu_1)\ee
implies
\bel{t1c} a(x,\mu )=0,\quad (x,\mu)\in M\times(-c\mu_1,c\mu_1),\ee
and, for any   $x_0\in\partial M$, we have
\bel{t1dd} h_\mu(x)=h_\mu(x_0)+\mu (f(x)-f(x_0)),\quad (x,\mu)\in \partial M\times(-\mu_1,\mu_1).\ee
\end{theorem}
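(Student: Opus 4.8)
The plan is to work with the solutions $u_\mu := u(\cdot\,;\mu f)$ of \eqref{eq1} carrying boundary data $\mu f$, and to turn the monotonicity \eqref{cond3} into a sign for $u_\mu$ via the maximum principle. First I would linearize: by \eqref{cond1} and the fundamental theorem of calculus,
\[
a(x,u_\mu(x)) = b_\mu(x)\, u_\mu(x), \qquad b_\mu(x) := \int_0^1 \partial_\mu a\big(x,t\,u_\mu(x)\big)\,dt,
\]
where \eqref{cond3} gives $b_\mu\geq 0$, so that $u_\mu$ solves the linear equation $-\Delta_g u_\mu + b_\mu u_\mu = 0$ with nonnegative potential. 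For $\mu>0$ the boundary data satisfies $\mu f\geq c\mu>0$ by \eqref{t1a}, and testing against the negative part $(u_\mu)_-\in H^1_0(M)$ together with $b_\mu\geq 0$ yields $u_\mu\geq 0$ in $M$; symmetrically $u_\mu\leq 0$ for $\mu<0$, while $u_0\equiv 0$ by uniqueness. Hence, by the monotonicity of $a(x,\cdot)$ about $a(x,0)=0$, the function $a(\cdot\,,u_\mu)$ has the constant sign of $\mu$.

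Next I would convert the boundary hypothesis \eqref{t1b} into interior vanishing. Writing $Pg$ for the harmonic extension of a boundary function $g$, so that $\Lambda_0 g=\partial_\nu(Pg)|_{\partial M}$ is linear in $g$, the divergence theorem applied to \eqref{eq1} gives
\[
\int_M a(x,u_\mu)\,dV_g = \int_M \Delta_g u_\mu\,dV_g = \int_{\partial M}\partial_\nu u_\mu\,dS_g = \int_{\partial M}\Lambda_a(\mu f)\,dS_g,
\]
whereas $\int_{\partial M}\Lambda_0(h_\mu)\,dS_g=\int_M\Delta_g(Ph_\mu)\,dV_g=0$ because $Ph_\mu$ is harmonic. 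Thus \eqref{t1b} forces $\int_M a(x,u_\mu)\,dV_g=0$; since $a(\cdot\,,u_\mu)$ is continuous and of constant sign it must vanish identically, i.e. $a(x,u_\mu(x))=0$ for every $x\in M$ and every $\mu\in(-\mu_1,\mu_1)$. Consequently $-\Delta_g u_\mu=0$, so $u_\mu=P(\mu f)=\mu\,Pf$.

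To extract \eqref{t1c} I would use the strict positivity from \eqref{t1a}: the minimum principle gives $Pf(x)\geq \min_{\partial M}f=c>0$ for all $x\in M$, so as $\mu$ runs over $(-\mu_1,\mu_1)$ the interior value $u_\mu(x)=\mu\,Pf(x)$ covers $\big(-\mu_1 Pf(x),\mu_1 Pf(x)\big)\supseteq(-c\mu_1,c\mu_1)$. Hence for any $(x,\nu)\in M\times(-c\mu_1,c\mu_1)$ the choice $\mu=\nu/Pf(x)$ lies in $(-\mu_1,\mu_1)$ and yields $a(x,\nu)=a(x,u_\mu(x))=0$, which is \eqref{t1c}. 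Finally, harmonicity of $u_\mu=\mu Pf$ gives $\Lambda_a(\mu f)=\mu\,\partial_\nu(Pf)|_{\partial M}=\mu\,\Lambda_0(f)$, so by linearity \eqref{t1b} becomes $\Lambda_0(h_\mu-\mu f)=0$. Applying Green's identity $\int_M|\nabla^g V|^2\,dV_g=\int_{\partial M}V\,\partial_\nu V\,dS_g$ to $V=P(h_\mu-\mu f)$ shows $V$ is constant, hence $h_\mu-\mu f$ is constant on $\partial M$; evaluating at $x_0$ gives $h_\mu(x)=h_\mu(x_0)+\mu(f(x)-f(x_0))$, which is \eqref{t1dd}.

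The step I expect to be most delicate is the passage from the pointwise identity $a(x,u_\mu(x))=0$ to the interval statement \eqref{t1c}: it hinges on the interior values $u_\mu(x)$ exhausting all of $(-c\mu_1,c\mu_1)$, which is exactly where the positive lower bound $Pf\geq c$---and thus the hypothesis $\inf_{\partial M}f=c>0$---is indispensable. The sign analysis must also be carried out through the linearized potential $b_\mu\geq 0$ rather than a direct maximum principle, since \eqref{cond3} provides only non-strict monotonicity; this is what makes the constant-sign conclusion for $a(\cdot\,,u_\mu)$, and hence the vanishing of its integral, reliable.
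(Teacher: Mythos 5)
Your proposal is correct, and its overall architecture coincides with the paper's: both proofs hinge on (i) showing $u_\mu$ has the sign of $\mu$, so that $a(\cdot,u_\mu)$ has constant sign; (ii) combining the DN-map equality with the divergence theorem (and the vanishing of $\int_{\partial M}\Lambda_0(h_\mu)\,dS_g$) to force $\int_M a(x,u_\mu)\,dV_g=0$ and hence $a(\cdot,u_\mu)\equiv 0$; (iii) deducing $u_\mu=\mu\,Pf$ with $Pf\geq c$ so that the interior values sweep out $(-c\mu_1,c\mu_1)$; and (iv) getting \eqref{t1dd} from the constancy of the harmonic difference with vanishing Neumann data. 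The one place where you genuinely diverge is step (i): the paper differentiates the solution in the parameter $\mu$, applies the maximum principle to the linearized equation $-\Delta_g v^{(1)}_\mu+\partial_\mu a(x,v_\mu)v^{(1)}_\mu=0$ with boundary value $f\geq c>0$ to get $v^{(1)}_\mu\geq 0$, and then integrates in $\mu$ from $v_0\equiv 0$; you instead write $a(x,u_\mu)=b_\mu u_\mu$ with $b_\mu\geq 0$ by the fundamental theorem of calculus and run a weak maximum principle directly on $u_\mu$ by testing with $(u_\mu)_-\in H^1_0(M)$. Your variant is slightly more self-contained, since it avoids the differentiability of the solution map $\mu\mapsto u_\mu$, which the paper's argument uses implicitly (it is standard, following from the well-posedness theory, but is not justified in the proof itself); the paper's variant yields in passing the monotonicity of $\mu\mapsto v_\mu$, which is not needed for the conclusion. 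Likewise, in step (iv) you replace the paper's citation of a uniqueness result for the Neumann problem by an explicit Green's identity; these are equivalent. All the pointwise claims you flag as delicate (the range argument needing $Pf\geq c$, and the non-strict monotonicity handled through the nonnegative potential) are treated correctly.
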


As a direct consequence of Theorem \ref{t1} we can prove the following.

\begin{corollary}\label{c1} Let the conditions of Theorem \ref{t1} be fulfilled. Then 
$$\Lambda_a(\mu f)=\Lambda_0(\mu f),\quad \mu\in(-\mu_1,\mu_1)$$
implies \eqref{t1c}.\end{corollary}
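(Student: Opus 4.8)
The plan is to derive Corollary~\ref{c1} by specializing Theorem~\ref{t1} to the family $h_\mu := \mu f$. The first step is to verify that this is an admissible choice of family $(h_\mu)_{\mu\in\R}$ in the sense required by Theorem~\ref{t1}: each $h_\mu$ must belong to $C^{2+\alpha}(\partial M)$. Since $f\in C^{2+\alpha}(\partial M)$ by hypothesis and scalar multiplication preserves this Hölder regularity, we have $h_\mu=\mu f\in C^{2+\alpha}(\partial M)$ for every $\mu\in\R$, so the family is indeed admissible and the hypotheses of Theorem~\ref{t1} (conditions \eqref{cond1}, \eqref{cond3} and \eqref{t1a}) carry over unchanged.

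With this choice, hypothesis \eqref{t1b} of Theorem~\ref{t1} reads $\Lambda_a(\mu f)=\Lambda_0(\mu f)$ for $\mu\in(-\mu_1,\mu_1)$, which is exactly the assumption of the corollary. I would therefore simply invoke Theorem~\ref{t1}: its conclusion \eqref{t1c} immediately yields $a(x,\mu)=0$ for all $(x,\mu)\in M\times(-c\mu_1,c\mu_1)$, which is precisely the assertion to be proved.

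As an internal consistency check (not needed for the logical flow, but reassuring), one can observe that the second conclusion \eqref{t1dd} of Theorem~\ref{t1} is automatically compatible with the choice $h_\mu=\mu f$: for any $x_0\in\partial M$ one computes $h_\mu(x_0)+\mu\bigl(f(x)-f(x_0)\bigr)=\mu f(x_0)+\mu f(x)-\mu f(x_0)=\mu f(x)=h_\mu(x)$. Since \eqref{t1dd} is satisfied identically by this family, the specialization imposes no hidden constraint, confirming that the corollary is a genuine direct consequence rather than a vacuous one.

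As for the main obstacle, there is essentially none of substance at the level of the corollary. All the analytic content—the well-posedness for small Dirichlet data, the linearization/interaction argument, and the propagation of the vanishing of $a$ from the boundary range into the interior range $(-c\mu_1,c\mu_1)$—is already packaged inside Theorem~\ref{t1}. The corollary is a one-line specialization, and the only point requiring any attention is the (immediate) verification that $h_\mu=\mu f$ is a legitimate member of the admissible family.
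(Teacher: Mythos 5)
Your proof is correct and is exactly the argument the paper intends: the corollary is the specialization of Theorem~\ref{t1} to the admissible family $h_\mu=\mu f$, under which hypothesis \eqref{t1b} becomes the corollary's assumption and conclusion \eqref{t1c} is the claim. The consistency check against \eqref{t1dd} is a nice touch but, as you note, not needed.
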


In order to extend this result we need to impose more assumptions on the class of nonlinear terms under considerations. For semilinear terms that are  of separated variables, we obtain the following extension of Theorem \ref{t1}.
\begin{theorem}
	\label{thm_analytic}
	Let $(M,g)$ be a smooth compact Riemannian manifold of dimension $n\geq 2$ with boundary. Suppose that $a\in C^{\infty}(M\times \R)$ is of the form $a(x,s)= q(x) F(s)$ for some smooth functions $q$ and $F$ with $F(0)=F'(0)=0$ and such that $F^{(m)}(0)\neq 0$ for some $m\in \N$.
If 
\bel{DN_a} \Lambda_{a}(f) = \Lambda_0(f),\ee
for all $f\in C^{2,\alpha}(\pd M)$ in a neighborhood of the origin, then, 
$$ a(x,s) = 0 \quad \forall \, (x,z) \in M\times \R.$$
\end{theorem}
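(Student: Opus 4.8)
The plan is to resolve this via the higher-order linearization of the Dirichlet-to-Neumann map, reducing the problem to a density statement for products of harmonic functions. Let $m_0\geq 2$ be the smallest integer with $F^{(m_0)}(0)\neq 0$; such an integer exists by hypothesis and satisfies $m_0\geq 2$ since $F(0)=F'(0)=0$. The first observation is that $F'(0)=0$ forces the linearization of \eqref{eq1} at the zero solution to be exactly $-\Delta_g$: because $\partial_s a(x,0)=q(x)F'(0)=0$, condition \eqref{cond2} reduces to the Poincar\'e inequality and Proposition \ref{p1} applies, so for boundary data $f=\sum_{j=1}^{m_0}\eps_j f_j$ with $\eps=(\eps_1,\dots,\eps_{m_0})$ small the solution $u(\cdot;f)$ depends smoothly on $\eps$, with $u|_{\eps=0}\equiv 0$ (here $F(0)=0$ gives \eqref{cond1}). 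Writing $v_j=\partial_{\eps_j}u|_{\eps=0}$, each $v_j$ is $\Delta_g$-harmonic with $v_j|_{\partial M}=f_j$, so the $v_j$ range over arbitrary harmonic functions as the $f_j$ vary.

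Next I would carry out the $m_0$-th order linearization. Let $w=\partial_{\eps_1}\cdots\partial_{\eps_{m_0}}u|_{\eps=0}$. Differentiating the equation $-\Delta_g u+q\,F(u)=0$ and using that $F^{(k)}(0)=0$ for all $0\le k<m_0$, every term in the Fa\`a di Bruno expansion vanishes except the fully mixed one, giving
\beq -\Delta_g w = -\,F^{(m_0)}(0)\,q\,\prod_{j=1}^{m_0}v_j \quad\text{in }M. \eeq
Since the boundary data is affine in $\eps$, all mixed derivatives of order $m_0\geq 2$ of the trace vanish, so $w|_{\partial M}=0$; and differentiating \eqref{DN_a} $m_0$ times, the linear map $\Lambda_0$ contributes nothing, whence $\partial_\nu w|_{\partial M}=0$. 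Thus $w$ has vanishing Cauchy data. Pairing the equation against an arbitrary harmonic function $v_0$ and applying Green's second identity, the boundary terms drop out and $\Delta_g v_0=0$ removes the remaining volume term, leaving
\beq F^{(m_0)}(0)\int_M q\,v_0 v_1\cdots v_{m_0}\,dV_g = 0. \eeq
As $F^{(m_0)}(0)\neq 0$, this yields $\int_M q\,\prod_{j=0}^{m_0}v_j\,dV_g=0$ for all harmonic $v_0,\dots,v_{m_0}$, that is, for products of $m_0+1\geq 3$ harmonic functions (using scaling to pass from small traces to arbitrary ones).

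It remains to prove the density statement: if $\int_M q\,v_0\cdots v_{m_0}\,dV_g=0$ for all $\Delta_g$-harmonic $v_j$ and the number of factors is at least three, then $q\equiv 0$. This is the heart of the matter and the step I expect to be the main obstacle, precisely because $(M,g)$ is allowed to be arbitrary: with only two factors this is the linearized anisotropic Calder\'on problem, which requires global complex geometric optics solutions and hence geometric restrictions on $(M,g)$. The availability of three or more factors is what should let us dispense with such restrictions. My plan is to construct, for each interior point $p$, families of harmonic functions localized near $p$ — for instance WKB/Gaussian-beam quasimodes concentrating along geodesics through $p$ — and to choose the factors so that the product $v_0\cdots v_{m_0}$ concentrates at $p$ while each factor remains asymptotically harmonic; the extra factors provide the room to build a genuine approximate identity at $p$ out of families that individually cannot concentrate. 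Inserting such a product into the integral identity and passing to the limit should give $q(p)=0$. The delicate points will be the construction of sufficiently many such harmonic quasimodes on a general manifold and the control of the harmonicity errors under multiplication; once these are in place, since $p$ is arbitrary we obtain $q\equiv 0$, and as $F\not\equiv 0$ we conclude $a=qF\equiv 0$ on $M\times\R$.
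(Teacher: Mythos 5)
Your derivation of the integral identity is correct: the $m_0$-th order linearization with data $f=\sum_j\eps_j f_j$, together with \eqref{DN_a} and Green's identity, does yield $\int_M q\,v_0v_1\cdots v_{m_0}\,dV_g=0$ for all $\Delta_g$-harmonic $v_j$ (and your observation that $\pd_s a(x,0)=q(x)F'(0)=0$ makes \eqref{cond2} automatic, so Proposition \ref{p1} applies, is also right). The genuine gap is the step you yourself flag as the "heart of the matter": the claim that orthogonality of $q$ to products of three or more harmonic functions forces $q\equiv 0$ on an \emph{arbitrary} compact Riemannian manifold. This is not a known result in dimension $n\geq 3$; it is essentially an open problem, and your sketch for it cannot work as described. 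Harmonic functions on a general manifold cannot be localized at a point: quasimode/Gaussian-beam constructions concentrate along geodesics, and to make a \emph{product} of them concentrate one needs the oscillatory phases to cancel, which is exactly where geometric hypotheses (limiting Carleman weights, transversal anisotropy, conditions on geodesics) enter. All existing density results for products of three or more solutions — \cite[Proposition 6]{FO20}, as well as the results of \cite{DKSU,CFO} — require such hypotheses; that is precisely why Theorems \ref{t2}, \ref{t3}, \ref{t4} of this paper, which do rely on these density results, are stated only for transversally anisotropic manifolds. (In dimension $2$ a density result of this type is available via \cite{LLLS}, but that does not cover the generality claimed here.) So your plan reduces a theorem valid on every manifold to a density statement that is open in the very generality you need.

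The paper's proof avoids density arguments entirely by exploiting the separated form $a(x,s)=q(x)F(s)$ and using \emph{only constant} Dirichlet data $f=\eps$. Expanding $u_\eps=\eps+\sum_{k=m}^{2m-2}\eps^k v_k+\eps^{2m-1}w+O(|\eps|^{2m})$, the functions $v_k$ and $w$ solve Poisson-type equations with sources proportional to $q$ and to $q\,v_m$, and \eqref{DN_a} forces all their Neumann traces to vanish. Integrating the equation for $v_m$ over $M$ gives $\int_M q\,dV_g=0$; integrating the equation for $w$ (the order $\eps^{2m-1}$ term, whose source contains the cross term $\frac{F^{(m)}(0)}{(m-1)!}q\,v_m$) then gives $\int_M q\,v_m\,dV_g=0$; finally, multiplying the $v_m$-equation by $v_m$ and integrating by parts yields $\int_M|\nabla_g v_m|_g^2\,dV_g=0$, hence $v_m\equiv 0$ and $q\equiv 0$. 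Note that the key identity $\int_M q\,v_m\,dV_g=0$ involves the \emph{non-harmonic} function $v_m$ and comes from the $(2m-1)$-th linearization; it is not contained in your single $m_0$-th order identity. In short, the paper trades your strong (and unavailable) density lemma for more linearizations of a single, very special family of data, which is what makes the result hold on every manifold.
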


Next, if the nonlinear term $a(x,u)$ is even and analytic with respect to $u$, we have the following result.
 
\begin{theorem}
	\label{thm_analytic_2}
	Let $(M,g)$ be a smooth compact Riemannian manifold of dimension $n\geq 2$ with boundary. Suppose that $a\in C^{\infty}(M\times \R)$ is real-analytic with respect to $s$ in the sense of taking values in $\mathcal C^\alpha(M)$. Assume that $a(x,0)=\pd_s a(x,0)=0$ for all $x\in M$ and also that $a(x,s)=a(x,-s)$ for all $x\in M$ and $s\in \R$. If 
	\bel{DN_a} \Lambda_{a}(f) = \Lambda_0(f),\ee
	for all $f\in C^{2,\alpha}(\pd M)$ in a neighborhood of the origin, then, 
	$$ a(x,s) = 0 \quad \forall \, (x,s) \in M\times \R.$$
\end{theorem}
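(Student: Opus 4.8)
The plan is to run the higher-order linearization scheme at the zero solution and to reduce the statement to the vanishing of all Taylor coefficients of $a$ in the $s$-variable. Write $a_k(x)=\tfrac{1}{k!}\partial_s^k a(x,0)$, so that $a(x,s)=\sum_{k\ge 2}a_k(x)s^k$ with convergence in $\mathcal C^\alpha(M)$; the hypotheses give $a_0=a_1\equiv 0$, while evenness forces $a_k\equiv 0$ for every odd $k$. Since $s\mapsto a(\cdot,s)$ is real-analytic with values in $\mathcal C^\alpha(M)$, it suffices to prove $a_k\equiv 0$ for all even $k\ge 2$. I would argue by contradiction: let $m_0$ be the smallest index with $a_{m_0}\not\equiv 0$, noting that $m_0$ is even and $m_0\ge 2$.

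The first step is to produce a clean integral identity. Fix boundary data $f=\sum_{j=1}^{m_0}\epsilon_j f_j$ with $f_j\in C^{2,\alpha}(\partial M)$ and the $\epsilon_j$ small, let $u_\epsilon$ be the corresponding solution of \eqref{eq1}, and set $w=\partial_{\epsilon_1}\cdots\partial_{\epsilon_{m_0}}u_\epsilon|_{\epsilon=0}$. Because the boundary data depend linearly on $\epsilon$, the mixed derivative of $u_\epsilon|_{\partial M}$ vanishes, so $w|_{\partial M}=0$; and because $\Lambda_a=\Lambda_0$ on the linear solutions $\sum_j\epsilon_j v_j$ (with $v_j$ the $g$-harmonic extension of $f_j$), applying the same derivative to the Neumann traces gives $\partial_\nu w|_{\partial M}=0$. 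Differentiating $-\Delta_g u_\epsilon+a(x,u_\epsilon)=0$ and using $a_1\equiv 0$ together with $a_k\equiv 0$ for $2\le k<m_0$, every term drops except the one coming from $a_{m_0}u_\epsilon^{m_0}$, yielding $\Delta_g w=m_0!\,a_{m_0}\prod_{j=1}^{m_0}v_j$ in $M$. Testing against an arbitrary $g$-harmonic $v_0$ and integrating by parts, the boundary contributions vanish thanks to the Cauchy data of $w$, and we arrive at
\begin{equation}
\int_M a_{m_0}(x)\,v_0 v_1\cdots v_{m_0}\,dV_g=0
\end{equation}
for all $g$-harmonic functions $v_0,v_1,\dots,v_{m_0}$ on $M$.

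The second, decisive step is to deduce $a_{m_0}\equiv 0$ from this family of identities, which amounts to a completeness statement for products of harmonic functions: one must show that the linear span of $\{v_0 v_1\cdots v_{m_0}\}$ is rich enough in $L^1(M)$ to detect $a_{m_0}$. This is where the difficulty concentrates: for two factors the analogous density is equivalent to the (open) anisotropic Calder\'on problem, so no two-fold argument can succeed on a general manifold. The evenness hypothesis is used precisely here, since it guarantees $m_0+1\ge 3$ factors. The plan is to localize: fixing an interior point $x_0$ and a small coordinate ball, I would use Runge approximation for $\Delta_g$ to approximate, on that ball, local harmonic functions by globally defined ones, while reserving the surplus factor(s) to confine the mass of the product near $x_0$; on the small ball the metric is a perturbation of the Euclidean one and products of Euclidean harmonic functions are dense, which lets one recover $a_{m_0}(x_0)$. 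Iterating over $x_0$ gives $a_{m_0}\equiv 0$, contradicting the choice of $m_0$.

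Consequently no smallest nonvanishing index can exist, so $a_k\equiv 0$ for every $k$, and the real-analyticity of $s\mapsto a(\cdot,s)$ in $\mathcal C^\alpha(M)$ upgrades this to $a(x,s)=0$ for all $(x,s)\in M\times\R$. I expect the main obstacle to be the completeness step: reconciling the globally defined harmonic test functions with a genuine localization near an interior point is delicate, and the entire argument hinges on exploiting the extra factor furnished by evenness to bypass the two-fold density barrier. By comparison, the linearization bookkeeping and the passage from the coefficients $a_k$ back to $a$ via analyticity are routine.
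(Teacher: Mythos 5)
Your derivation of the integral identity $\int_M a_{m_0}\,v_0v_1\cdots v_{m_0}\,dV_g=0$ for $g$-harmonic $v_0,\dots,v_{m_0}$ is correct and standard, but the step you yourself flag as decisive --- deducing $a_{m_0}\equiv 0$ from it --- is a genuine gap, and the sketch you give for it would fail. Runge approximation does let you replace global $g$-harmonic functions by local ones on a small ball, but what you then need is density of products of \emph{local $g$-harmonic} functions, and that is exactly the unknown territory: the fact that $g$ is a small perturbation of the Euclidean metric on the ball does not allow you to test the identity with \emph{Euclidean} harmonic functions (the factors must solve $\Delta_g v=0$), and Calder\'on-type density for products of solutions is not stable under non-analytic perturbations of the coefficients --- that instability is precisely why the linearized anisotropic Calder\'on problem is open. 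Likewise, ``reserving the surplus factor(s) to confine the mass of the product near $x_0$'' is an assertion, not a construction: a harmonic factor cannot concentrate near an interior point (maximum principle/unique continuation), and the known mechanisms for making products of solutions concentrate at a point --- Gaussian beams along intersecting geodesics, as in \cite{FO20,LLLS} --- require transversally anisotropic or similar structure, which this theorem deliberately does not assume. So as written, your proposal reduces the theorem to a research-level completeness problem that it does not solve.

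The theorem can in fact be proved with no density statement at all, and this is what the paper does: it uses only constant Dirichlet data $f=\epsilon$. Writing $a(x,s)=\sum_{k\ge m}a_{2k}(x)s^{2k}$ with $a_{2m}\not\equiv0$, the solution expands as $u_\epsilon=\epsilon+\sum_{k=m}^{2m-1}\epsilon^{2k}v_{2k}+\epsilon^{4m-1}w+O(|\epsilon|^{4m})$, where $-\Delta_g v_{2k}+a_{2k}=0$ with $v_{2k}|_{\partial M}=0$, and --- this is where evenness enters --- all odd-order coefficients below $4m-1$ vanish, while the coefficient at the odd order $4m-1$ solves $-\Delta_g w+2m\,a_{2m}v_{2m}=0$ with $w|_{\partial M}=0$. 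Since $\Lambda_a(\epsilon)=\Lambda_0(\epsilon)=0$, every coefficient also has vanishing Neumann trace; integrating the $w$-equation over $M$ gives $\int_M a_{2m}v_{2m}\,dV_g=0$, and because $a_{2m}=\Delta_g v_{2m}$ with $v_{2m}|_{\partial M}=0$, this quantity equals $-\int_M|\nabla_g v_{2m}|_g^2\,dV_g$, forcing $v_{2m}\equiv0$ and hence $a_{2m}\equiv0$, a contradiction. Note the different role of evenness: it is used not to supply extra harmonic factors, but to isolate the single interaction term $a_{2m}v_{2m}$ at the first odd order of the expansion, after which the test function is $v_{2m}$ itself (through its own equation) rather than arbitrary harmonic functions --- an elementary energy identity replaces the completeness problem entirely.
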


We obtain also several improvements of Theorem \ref{t1} by imposing some geometrical assumptions to the Riemmanian manifold $(M,g)$.
Namely, we assume that $(M,g)$  is a  {\em transversally anisotropic} which means that $M \subset \R\times M_0$ and $g= dt^2 + g_0$ where $(M_0,g_0)$ is a compact Riemannian manifold with boundary.

\begin{theorem}
\label{t2}
Let $(M,g)$ be transversally anisotropic of dimension $n\geq 3$ and assume that the transversal manifold $(M_0,g_0)$ is nontrapping. 
We fix $a\in \mathcal C^\infty(M\times\R)$ satisfying condition \eqref{cond1} and such that
 \bel{t2a} \partial_\mu a(x,0)\geq 0.\ee
Then, \eqref{cond2} is fulfilled and there exists $\lambda\in(0,\lambda_1]$ such that the condition
 \bel{t2b} \Lambda_af=\Lambda_0f,\quad f\in B_\lambda \ee 
implies
\bel{t2c} a(x,\mu )=0,\quad (x,\mu)\in M\times[-\lambda,\lambda].\ee
\end{theorem}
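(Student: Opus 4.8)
The plan is to linearize the boundary relation \eqref{t2b} around constant backgrounds and to exploit the product structure $g=dt^2+g_0$ through Gaussian beam (CGO) solutions concentrating on transversal geodesics. First I would dispose of the preliminaries. Since $\partial_\mu a(x,0)\geq0$ by \eqref{t2a}, for any $v\in H^1_0(M)$ the two integrals in \eqref{cond2} are nonnegative, so their vanishing forces $\int_M\langle\nabla^gv,\nabla^gv\rangle_g\,dV_g=0$; hence $v$ is constant and, being in $H^1_0(M)$ on the connected $M$, we get $v\equiv0$. Thus \eqref{cond2} holds and Proposition \ref{p1} supplies $\lambda_1>0$, the map $\Lambda_a$ on $B_{\lambda_1}$, and — via the implicit function theorem in Hölder spaces and the smoothness of $a$ — differentiability of $f\mapsto u_f$ to all orders in finitely many parameters, which legitimises the linearizations below. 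For small constants $c$ write $u_c$ for the solution of \eqref{eq1} with $u_c|_{\partial M}=c$ (so $u_0\equiv0$ by \eqref{cond1}), and set $q_c:=\partial_\mu a(\cdot,u_c)\in C^\alpha(M)$, the potential of the operator obtained by linearizing \eqref{eq1} at $u_c$.

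The second step is the first-order linearization at the background $0$. Differentiating \eqref{t2b} once in the Dirichlet data shows that the DN map of $-\Delta_g+q_0$ equals $\Lambda_0$, where $q_0=\partial_\mu a(\cdot,0)\geq0$. The associated integral identity $\int_M q_0\,v_1v_2\,dV_g=0$ holds whenever $v_1$ solves $(-\Delta_g+q_0)v_1=0$ and $v_2$ is harmonic. Taking $v_2\equiv1$ and $v_1$ the solution with $v_1|_{\partial M}=1$, the strong maximum principle for $-\Delta_g+q_0$ gives $v_1>0$ in the interior of $M$, so $\int_M q_0\,v_1\,dV_g=0$ with $q_0\geq0$ and $v_1>0$ forces $\partial_\mu a(\cdot,0)=q_0\equiv0$.

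The third step, which is the technical heart, is the second-order linearization at an arbitrary small background $c$. Expanding \eqref{eq1} around $u_c$ and matching the vanishing mixed second derivative of \eqref{t2b} yields, for all solutions $v_1,v_2,v_3$ of $(-\Delta_g+q_c)v=0$,
\[
\int_M \partial_\mu^2 a(\cdot,u_c)\,v_1 v_2 v_3\,dV_g=0.
\]
Here I would use the transversal structure and the nontrapping of $(M_0,g_0)$ to build Gaussian beam quasimodes for $-\Delta_g+q_c$ concentrating along geodesics of $M_0$: choosing $v_1,v_2$ to concentrate along two geodesics meeting transversally at a point $p\in M_0$ (possible because $\dim M_0=n-1\geq2$) localizes the product $v_1v_2$ at $p$ in the transversal variable, while arranging the $t$-exponentials so that a factor $e^{i\lambda t}$ survives supplies a Fourier variable in $t$. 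Passing to the limit in the large beam parameter recovers the Fourier transform in $t$ of $\partial_\mu^2 a(\cdot,u_c)$ at $y=p$; letting $p$ range over $M_0$ and $\lambda$ over $\R$ gives $\partial_\mu^2 a(\cdot,u_c)\equiv0$ on $M$. I expect the main obstacle to be precisely this construction: producing the beams as genuine approximate solutions with remainders controlled uniformly up to the boundary (which is exactly where nontrapping is used) and justifying the point-concentration of the two-beam product with explicit error bounds.

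Finally I would globalise. As $q_0\equiv0$, the derivative $z=\tfrac{d}{dc}u_c\big|_{c=0}$ solves $-\Delta_g z=0$, $z|_{\partial M}=1$, hence $z\equiv1$; so $c\mapsto u_c(x)$ has derivative $1$ at $c=0$ uniformly in $x$, and by compactness of $M$ there is $\delta>0$ with $\{u_c(x):|c|<\delta'\}\supseteq(-\delta,\delta)$ for every $x\in M$. For any $(x,\mu)\in M\times(-\delta,\delta)$, picking $c$ with $u_c(x)=\mu$ and applying the third step gives $\partial_\mu^2 a(x,\mu)=\partial_\mu^2 a(x,u_c(x))=0$. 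Thus $\partial_\mu^2 a\equiv0$ on $M\times(-\delta,\delta)$, so $a(x,\cdot)$ is affine there; together with $a(x,0)=0$ and $\partial_\mu a(x,0)=0$ this forces $a\equiv0$ on $M\times(-\delta,\delta)$, and choosing $\lambda\in(0,\lambda_1]$ with $\lambda<\delta$ yields \eqref{t2c}.
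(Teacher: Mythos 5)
Your preliminary step (deducing \eqref{cond2} from \eqref{t2a}) and your second step (first-order linearization at the zero background, Alessandrini's identity with $v_2\equiv 1$, the maximum principle, and positivity forcing $q_0=\partial_\mu a(\cdot,0)\equiv 0$) coincide in substance with the first part of the paper's proof. The divergence comes after that, and it contains a genuine gap. The paper never uses the second linearization: having shown $\partial_\mu a(\cdot,0)\equiv 0$, it notes that for each small background $t$ the \emph{first} linearization exhibits a linear anisotropic Calder\'{o}n problem in which the potential $q_t=\partial_\mu a(\cdot,v_{t,0})$ has the same DN map as the zero potential and satisfies $\|q_t\|_{L^\infty(M)}\to 0$ as $t\to 0$; it then invokes the small-potential uniqueness theorem of \cite[Theorem 2]{CFO}, which is proved precisely for transversally anisotropic manifolds with nontrapping transversal factor, to get $q_t\equiv 0$, whence $a(x,v_{t,0}(x))$ is constant in $t$ and equal to $a(x,0)=0$, so $v_{t,0}\equiv t$ and $a(x,t)=0$. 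The nontrapping hypothesis of Theorem \ref{t2} is calibrated to that linear result, not to any density theorem for products of solutions.

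Your third step asserts, under nontrapping alone, a density result for products of \emph{three} solutions of the same Schr\"odinger equation, to be proved by beams concentrating along two transversally intersecting geodesics with a surviving factor $e^{i\lambda t}$. This construction does not work. Writing the three frequencies as $s_j=\tau_j+i\lambda_j$, boundedness of the factor $e^{(s_1+s_2+s_3)t}$ forces $\tau_1+\tau_2+\tau_3=0$, and at a common point $p$ the product of the transversal profiles carries the phase $\exp\bigl(i\sum_j s_j\ell_j\bigr)$ whose leading gradient is $\sum_j \tau_j\dot\gamma_j(p)$; with $\tau_1+\tau_2+\tau_3=0$ and unit tangents, this gradient can vanish only if the $\dot\gamma_j(p)$ all coincide (or one $\tau_j=0$, in which case that factor is no longer a localized beam). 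So at a genuinely transversal intersection the integrand oscillates rapidly and the integral tends to zero regardless of $\partial_\mu^2 a$, recovering nothing. Phase cancellation by conjugate pairing is available only for an even number of beams — which is why third-linearization (four-solution) arguments do go through on nontrapping manifolds — and for three solutions the known substitute is \cite[Proposition 6]{FO20}, which requires non-self-intersecting maximal geodesics \emph{without conjugate points}: exactly the stronger hypothesis of Theorem \ref{t4} of this paper, and the reason Theorems \ref{t2} and \ref{t4} are stated separately. Thus the step you flag as a technical obstacle is in fact the missing idea: under nontrapping only, your route provides no way to pass from $\int_M \partial_\mu^2 a(\cdot,u_c)\,v_1v_2v_3\,dV_g=0$ to $\partial_\mu^2 a(\cdot,u_c)\equiv 0$, and the theorem as stated should instead be reduced to the linear result of \cite{CFO}.
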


As an extension of these results, we can also consider the situation where the DN map of an expression $a(x,u)$ coincide with one of and expression  $\tilde{a}(x,u)$ linear in $u$. These results can be stated as follows.

\begin{theorem}
\label{t3}
Let $(M,g)$ be transversally anisotropic of dimension $n\geq 3$ and assume that the transversal manifold $(M_0,g_0)$ is simple. 
We fix $a_j\in \mathcal C^\infty(M\times\R)$, $j=1,2$, satisfying condition \eqref{cond1} and \eqref{cond2}, such that
 \bel{t3a} a_1(x,\mu)=q(x)\mu,\quad (x,\mu)\in M\times \R,\ee
with $q\in L^\infty(M)$ a non-negative function.
Then for any $\lambda\in(0,\lambda_1]$ the condition
 \bel{t3b} \Lambda_{a_1}f=\Lambda_{a_2}f,\quad f\in B_\lambda \ee 
implies that there exists $c>0$ depending on $q$ and $M$ such that
\bel{t2c} a_2(x,\mu )=q(x)\mu,\quad (x,\mu)\in M\times[-c\lambda,c\lambda].\ee
\end{theorem}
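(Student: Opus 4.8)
The plan is to reduce this nonlinear rigidity statement to a one-parameter family of \emph{linear} inverse problems, obtained by linearizing the Dirichlet-to-Neumann map around nonzero background solutions, and then to invoke the known uniqueness theorem for the anisotropic Calderón problem on a transversally anisotropic manifold whose transversal factor is simple. For a fixed $f\in B_\lambda$ let $u^f$ denote the solution of $-\Delta_g u^f+a_2(x,u^f)=0$ with $u^f|_{\partial M}=f$, provided by the well-posedness statement preceding the theorem (its smooth dependence on $f$, and the invertibility of the linearization, are exactly what the implicit function theorem underlying that statement delivers, under condition \eqref{cond2}). I would first compute the Fréchet derivative $D\Lambda_{a_2}(f)$ and identify it with the linear Dirichlet-to-Neumann map of the Schrödinger operator $-\Delta_g+Q^f$, where $Q^f(x):=\partial_\mu a_2(x,u^f(x))$, by differentiating the equation along a one-parameter perturbation of the boundary datum. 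Since $a_1(x,\mu)=q(x)\mu$ is linear, $\Lambda_{a_1}$ is itself linear, so $D\Lambda_{a_1}(f)$ is, for every $f$, the Dirichlet-to-Neumann map of $-\Delta_g+q$.

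Next I would observe that the hypothesis $\Lambda_{a_1}f=\Lambda_{a_2}f$ on the open ball $B_\lambda$ forces the two maps to have equal derivatives at each $f\in B_\lambda$, hence
\[ \mathrm{DN}(-\Delta_g+q)=\mathrm{DN}(-\Delta_g+Q^f),\qquad f\in B_\lambda. \]
Both $q$ and $Q^f$ lie in $L^\infty(M)$ and the associated boundary value problems are well-posed, so the linear uniqueness theorem for Schrödinger operators on a transversally anisotropic manifold with simple transversal factor $(M_0,g_0)$ applies: using complex geometric optics solutions built from Gaussian beams concentrating on transversal geodesics together with the Fourier transform in the Euclidean variable $t$, equality of the two linear Dirichlet-to-Neumann maps reduces to the vanishing of the geodesic ray transform on $M_0$ of the Fourier transform in $t$ of $q-Q^f$, and injectivity of that transform for simple $(M_0,g_0)$ gives $Q^f=q$. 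Thus
\[ \partial_\mu a_2(x,u^f(x))=q(x),\qquad x\in M,\ f\in B_\lambda. \]

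It then remains to convert this identity along background graphs into the statement on $M\times[-c\lambda,c\lambda]$, and here the structure collapses pleasantly. I would specialize to the constant boundary data $f\equiv s$ with $|s|<\lambda$, and set $w:=\partial_s u^s$, which solves $(-\Delta_g+\partial_\mu a_2(\cdot,u^s))\,w=0$ with $w|_{\partial M}=1$. By the previous step the coefficient $\partial_\mu a_2(\cdot,u^s)$ equals $q$ for \emph{every} such $s$, so $w$ solves the single $s$-independent problem $(-\Delta_g+q)w=0$, $w|_{\partial M}=1$; by uniqueness $w=v$ is independent of $s$, and since $u^0\equiv0$ we get $u^s=s\,v$. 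Substituting into the equation and using $\Delta_g v=qv$ yields $a_2(x,s\,v(x))=q(x)\,\bigl(s\,v(x)\bigr)$ for all $x$ and all $|s|<\lambda$. Because $q\geq0$ and $v|_{\partial M}=1$, the maximum principle and the strong maximum principle give a lower bound $v\geq c>0$ on $M$ with $c=c(q,M)$; hence as $s$ runs over $(-\lambda,\lambda)$ the value $\mu=s\,v(x)$ covers the interval $(-c\lambda,c\lambda)$, and we conclude $a_2(x,\mu)=q(x)\mu$ on $M\times[-c\lambda,c\lambda]$.

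The hard part is the linear inverse step: one must verify that the Gaussian-beam complex geometric optics construction and the reduction to the geodesic ray transform go through for the merely bounded, $t$-dependent potentials $q$ and $Q^f$, and this is precisely where the simplicity of $(M_0,g_0)$ is indispensable (it guarantees injectivity of the transform). A secondary, but necessary, technical point is to justify the differentiation of $\Lambda_{a_2}$ and the identification of its derivative with a genuine linear Dirichlet-to-Neumann map, which requires that $-\Delta_g+Q^f$ have no Dirichlet eigenvalue at $0$ throughout $B_\lambda$; this is the invertibility encoded in the well-posedness of the nonlinear problem and, if it must be enforced by shrinking the admissible backgrounds, it is what is ultimately reflected in the size of the constant $c$.
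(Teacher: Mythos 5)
Your proposal is correct and follows essentially the same route as the paper's proof: first-order linearization of the Dirichlet-to-Neumann identity, the linear uniqueness theorem of \cite{DKSU} for transversally anisotropic manifolds with simple transversal factor to conclude $\partial_\mu a_2(x,u^f(x))=q(x)$, specialization to constant boundary data to get $u^s=s\,v$, and a maximum principle/Harnack lower bound $v\geq c>0$ that lets $\mu=s\,v(x)$ sweep out $[-c\lambda,c\lambda]$. One detail is in your favour: you linearize at a fixed background $f$ in an arbitrary direction $k$, which yields equality of the \emph{full} DN maps of two fixed Schr\"odinger potentials (exactly what the linear theorem requires), whereas the paper differentiates only along the ray $t\mapsto th$, so its linearized identity is tested at the single datum $h$ with an $h$-dependent potential $\partial_\mu a_2(\cdot,v_{t,h})$.
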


\begin{theorem}
\label{t4}
Let $(M,g)$ be transversally anisotropic of dimension $n\geq 3$ and assume that the transversal manifold $(M_0,g_0)$ contains a dense set of points $p\in M$ with a non-self-intersecting maximal geodesic 
through $p$ that contains no conjugate points to $p$. 
We fix $a_j\in \mathcal C^\infty(M\times\R)$, $j=1,2$, satisfying condition \eqref{cond1}, \eqref{cond2},  \eqref{t3a}  and such that
\bel{t4aa} \partial_\mu a_2(x,0)\geq0,\quad x\in M.\ee
Then,  there exists $\lambda\in(0,\lambda_1]$ such that the condition
 \bel{t4b} \Lambda_{a_1}f=\Lambda_{a_2}f,\quad f\in B_\lambda \ee 
implies that there exists $c>0$ depending only on $M$ and $\partial_\mu a_2(\cdot,0)$ such that
\bel{t4c} a_2(x,\mu )=\partial_\mu a_2(x,0)\mu,\quad (x,\mu)\in M\times[-c\lambda,c\lambda].\ee
\end{theorem}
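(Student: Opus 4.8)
The plan is to exploit the hypothesis \eqref{t3a} that $a_1(x,\mu)=q(x)\mu$ is linear, so that all the nonlinear information is carried by the difference $r(x,\mu):=a_2(x,\mu)-q(x)\mu$, which satisfies $r(x,0)=0$. Working with $\lambda\leq\lambda_1$ so that solutions exist, for $f\in B_\lambda$ let $u_1,u_2$ denote the solutions of \eqref{eq1} with $a=a_1,a_2$ and common Dirichlet data $f$. The equality \eqref{t4b} forces $u_1$ and $u_2$ to share both their Dirichlet and Neumann traces, so $w:=u_1-u_2$ has vanishing Cauchy data, and a direct computation gives
\[
-\Delta_g w + q\,w = r(x,u_2)\quad\text{in }M,\qquad w|_{\pd M}=\pd_\nu w|_{\pd M}=0.
\]
Pairing this with any solution $v$ of the linearized equation $-\Delta_g v+qv=0$ and integrating by parts (the boundary terms drop because of the vanishing Cauchy data) yields the central integral identity
\[
\int_M r\bigl(x,u_2(x;f)\bigr)\,v(x)\,dV_g=0,
\]
valid for every $f\in B_\lambda$ and every such $v$. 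Everything that follows is an analysis of this one identity.

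First I would extract the linear part. Differentiating the identity once in the amplitude of the Dirichlet data at $f=0$ shows that the potential $\pd_\mu a_2(\cdot,0)-q$ is $L^2$-orthogonal to all products $v_0 v$ of solutions of $-\Delta_g v+qv=0$. On a transversally anisotropic manifold such products concentrate, via Gaussian beams, on lifts of geodesics of $(M_0,g_0)$, so this orthogonality is governed by the geodesic ray transform on $(M_0,g_0)$; the hypothesis that a dense set of points lies on a non-self-intersecting geodesic free of conjugate points is exactly what lets me invert this transform at those points and conclude, by continuity, that $\pd_\mu a_2(\cdot,0)=q$. Iterating the same scheme through the higher-order linearizations gives in the same way $\pd_\mu^k r(x,0)=0$ for every $k\geq1$, i.e. $r(x,\cdot)$ is flat at $\mu=0$.

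The substantive step is then to upgrade this flatness to genuine vanishing of $r$ on a full interval, which I would obtain by using the integral identity non-perturbatively rather than term by term. Using the product structure $M\subset\R\times M_0$, I would build along a good geodesic $\gamma$ of $M_0$ a family of Gaussian-beam solutions $v$ of $-\Delta_g v+qv=0$ that concentrate on the lift of $\gamma$; the absence of conjugate points keeps the Jacobi fields nondegenerate so the beam stays focused, and non-self-intersection prevents self-interference. Choosing the Dirichlet data $f$ so that $u_2$ is itself close to a concentrated beam on the same geodesic, and using the flatness of $r$ so that $r(x,u_2)$ is negligible off the beam, localizes the identity to an integral $\int r(\gamma(s),u_2(\gamma(s)))\,W(s)\,ds=0$ with a positive weight $W$. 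The sign conditions $q\geq0$ and \eqref{t4aa} enter here to guarantee, via the maximum principle, both the positivity of the weight and that the interior values $u_2(\gamma(s))$ sweep out an interval about $0$ of size comparable to $\lambda$ as $f$ ranges over $B_\lambda$; this is what produces the interval $[-c\lambda,c\lambda]$ with $c$ depending only on $M$ and $\pd_\mu a_2(\cdot,0)$. Sliding the attained value through this interval and letting the beam concentrate then forces $r(p,\mu)=0$ at each point $p$ of the dense set and each admissible $\mu$, and continuity of $r$ extends this to all of $M\times[-c\lambda,c\lambda]$, which is precisely \eqref{t4c}.

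The hard part will be this last step, converting the single global identity into pointwise information. Two difficulties must be handled simultaneously: the nonlinear dependence of $u_2$ on $f$, so that the concentrated profile and the swept range of interior values are genuinely prescribed (which is where the maximum-principle and elliptic estimates pin down the range and fix $c$), and the fact that $a_2$ is only smooth, so that one cannot invoke analyticity in the data and the mere recovery of every Taylor coefficient does not by itself give linearity. The flatness of $r$ is what makes the off-beam contribution negligible to all orders and is the mechanism that substitutes for analyticity; making this rigorous, uniformly as the Gaussian-beam parameter degenerates and under only the weak dense-good-geodesic hypothesis, is the crux of the argument.
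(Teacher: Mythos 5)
Your starting identity $\int_M r(x,u_2(\cdot;f))\,v\,dV_g=0$ is correct, but the first step you build on it fails in this geometry. Differentiating once at $f=0$ leaves you with the orthogonality of $\partial_\mu a_2(\cdot,0)-q$ against products of \emph{two} solutions of linear Schr\"odinger equations, and to conclude $\partial_\mu a_2(\cdot,0)=q$ you need completeness of such products, i.e.\ essentially injectivity of the (attenuated) geodesic ray transform on the transversal manifold $(M_0,g_0)$. That is known when $(M_0,g_0)$ is simple --- which is exactly why Theorem \ref{t3} assumes simplicity and invokes \cite{DKSU} --- but it is \emph{not} known under the hypothesis of Theorem \ref{t4}, which permits conjugate points, non-convexity and trapping away from the given dense family of good geodesics. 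That weaker hypothesis is tailored to \cite[Proposition 6]{FO20}, which is a density statement for products of \emph{three} solutions. This is precisely why the paper's proof never compares $\partial_\mu a_2(\cdot,0)$ with $q$: note that the conclusion \eqref{t4c} only asserts that $a_2$ is linear with slope $\partial_\mu a_2(\cdot,0)$, not that $a_2=a_1$. The paper instead performs a second-order linearization with data $t+s_1h_1+s_2h_2$, obtains $\int_M \partial_\mu^2 a_2(x,v_{t,0})\,v^{(1)}_{1,t}v^{(1)}_{2,t}v^{(1)}_{3,t}\,dV_g=0$ for three arbitrary solutions of the linearized equation, and applies \cite[Proposition 6]{FO20} to conclude $\partial_\mu^2 a_2(x,v_{t,0}(x))=0$. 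Your plan proves a stronger statement whose very first step is, in this setting, an open problem; the higher-order steps of your iteration would indeed reduce to three-or-more-solution identities and go through, but the $k=1$ step blocks the whole scheme.

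The final ``non-perturbative'' localization step also fails, and not merely for lack of detail: it is internally inconsistent. You need $u_2$ simultaneously to be concentrated like a Gaussian beam along a geodesic and to attain interior values sweeping an interval of length comparable to $\lambda$, all while $f\in B_\lambda$, i.e.\ $\norm{f}_{C^{2+\alpha}(\partial M)}<\lambda$. Beam concentration requires a large frequency parameter $\tau$, and the $C^{2+\alpha}$ bound then forces the amplitude to be $O(\lambda\tau^{-2})$, so by the maximum principle the interior values of $u_2$ are $o(\lambda)$ as the beam focuses: concentration destroys exactly the sweep you need. The paper's sweep mechanism is entirely ``low frequency'': once $\partial_\mu^2 a_2(x,v_{t,0}(x))=0$ is known, the function $\partial_t^2 v_{t,0}$ solves the homogeneous linearized equation with zero Dirichlet data, and a spectral (Min--Max) argument using \eqref{t4aa} --- this is where that sign condition is consumed --- shows it vanishes for small $t$; hence $v_{t,0}=t\,y$ where $y$ solves the linear equation with boundary value $1$, and the maximum principle, Harnack inequality and unique continuation give $y\geq c>0$ with $c$ depending only on $M$ and $\partial_\mu a_2(\cdot,0)$. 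The values $t\,y(x)$, $t\in[-\lambda,\lambda]$, then cover $[-c\lambda,c\lambda]$, and the vanishing of $\partial_\mu^2 a_2$ there yields \eqref{t4c} directly, with no need to upgrade Taylor-coefficient information --- which, as you correctly sensed, could not suffice for a merely smooth $a_2$.
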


Let us observe that Theorem \ref{t1} is stated for general Riemannian manifolds with semilinear terms subjected to the natural conditions \eqref{cond1} and \eqref{cond3} guaranteeing the well-posedness of \eqref{eq1}. In addition, the excitations applied to the systems, associated with the Dirichlet boundary conditions $\mu f$ and $h_\mu$, are not necessary the same. In contrast to most related results, not only we remove the analyticity condition imposed to the map $\R\ni u\mapsto a(\cdot,u)$ but we also consider general manifolds not restricted to transversally anisotropic manifolds. In that sense, the results of Theorem \ref{t1}  give a better understanding of the rigidity problem IP1 in a rather general context. As a direct consequence of Theorem \ref{t1}, we obtain Corollary \ref{c1} stated with more canonical class of data. Note that even the determination of a coefficient of a linear equation in this context is still an open problem.

Let us remark that Theorem \ref{thm_analytic} gives an extension of Theorem \ref{t1} by replacing the positivity condition \eqref{cond3} by the more general condition \eqref{cond2} for a semilinear terms  of separated variables. In the same way, in Theorem \ref{thm_analytic_2} we consider semilinear terms $a(x,u)$ that are analytic and even in $u$ to which we do not impose \eqref{cond3}.
Note that, the results of Theorem \ref{thm_analytic} and \ref{thm_analytic_2} are still stated on a general Riemannian manifold and the well-posedness of problem \eqref{eq1} for semilinear terms satisfying conditions  \eqref{cond1}-\eqref{cond2} will be proved in Proposition \ref{p1}.

The results of Theorem  \ref{t2}, \ref{t3} and \ref{t4} give several improvements of Theorem \ref{t1} when the Riemannian manifold $(M,g)$ is assumed to be  transversally anisotropic. Namely,  in Theorem \ref{t2} we show that the result of Theorem \ref{t1} is still locally true when the condition \eqref{cond3} is replaced by the more general condition \eqref{t2a}, while in Theorem \ref{t3} and \ref{t4} we study the rigidity issue IP1 when the linear equation takes the more general form $-\Delta_gu+qu=0$ on $M$ with $q\in L^\infty(M)$.

This article is organized as follows.  In Section 3, we show the obstruction to problem IP stated in Theorem~\ref{comp_thm}. Section 4 will be devoted to the well-posdness of problem \eqref{eq1} for semilinear terms satisfying conditions \eqref{cond1}-\eqref{cond2}. In Section 5 we prove our first answer to problem IP1 stated in Theorem \ref{t1} while Section 6 will be devoted to the proof of the different extensions of Theorem \ref{t1} stated in Theorem  \ref{thm_analytic}, \ref{thm_analytic_2}, \ref{t2}, \ref{t3} and \ref{t4}.

\section{Proof of Theorem~\ref{comp_thm}}
\label{sec_comp}
Throughout this section, we assume that $(M,g)$ is a bounded Euclidean domain in $\R^n$, $n\geq 2$. We have the following lemma.
\begin{lem}
	Let $B_R(x_0)$ be the closed ball of radius $R>0$ centred at the point $x_0\in \R^n$. Let $F \in \mathcal C^2(\R)$ satisfy \eqref{F_1} and \eqref{F_2}. Given any $f\in C^{2+\alpha}(\pd B_R(x_0))$, let $u\in C^{2+\alpha}(B_R(x_0))$ be the unique solution to
	\bel{eq_F_u}
	\left\{
	\begin{array}{ll}
		-\Delta u+F(u)=0  & \mbox{in}\ B_R(x_0) ,\\
		u= f&\mbox{on}\ \partial B_R(x_0),
	\end{array}
	\right.
	\ee
 	Then, 
 	$$ |u(x_0)| \leq C,$$
 	for some constant $C>0$ that is independent of $f$.
	\end{lem}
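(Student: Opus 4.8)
The plan is to establish a universal interior bound of Keller--Osserman type, exploiting the superlinear lower bound \eqref{F_2} to construct a radial supersolution of \eqref{eq_F_u} that blows up at $\partial B_R(x_0)$ while remaining finite and smooth at the centre. Since $F(0)=0$ and $F'\geq 0$ by \eqref{F_1}, the operator $v\mapsto -\Delta v + F(v)$ obeys a weak comparison principle: if $w$ is a supersolution and $u$ a solution on a subdomain with $w\geq u$ on its boundary, then $w\geq u$ inside. Indeed, on the open set $D=\{u>w\}$ one has $\Delta(u-w)=\Delta u-\Delta w\geq F(u)-F(w)\geq 0$ because $F$ is nondecreasing and $u>w$ on $D$; thus $u-w$ is subharmonic in $D$ and vanishes on $\partial D$, which forces $D=\emptyset$. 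I will apply this comparison on interior balls exhausting $B_R(x_0)$.

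The heart of the argument is the barrier itself. Taking $x_0$ as the origin and writing $s=|x|$, I set
\[ w(x)=A\,(R^2-s^2)^{-\beta}, \qquad \beta=\tfrac{2}{\epsilon}, \]
with $A>0$ to be chosen. This $w$ is smooth and even on $B_R(x_0)$, so that $\nabla w(x_0)=0$ and no singularity arises at the centre; moreover $w\to +\infty$ as $s\to R$ and $w(x_0)=A R^{-2\beta}$. Writing $\eta=R^2-s^2$, a direct computation gives
\[ \Delta w = 2A\beta n\,\eta^{-\beta-1}+4A\beta(\beta+1)\,s^2\,\eta^{-\beta-2}\leq A\beta R^2\big(2n+4\beta+4\big)\,\eta^{-\beta-2}, \]
using $s^2\leq R^2$ and $\eta\leq R^2$. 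Since $w\geq 0$ and $F$ is nondecreasing with $F(0)=0$, we have $F(w)\geq 0$, and the choice $\beta=2/\epsilon$ makes $\beta(1+\epsilon)=\beta+2$, so \eqref{F_2} yields $F(w)\geq \delta A^{1+\epsilon}\eta^{-\beta-2}$. Hence the supersolution inequality $-\Delta w+F(w)\geq 0$ holds throughout $B_R(x_0)$ as soon as $\delta A^{\epsilon}\geq \beta R^2(2n+4\beta+4)$, a condition met by taking $A$ large depending only on $\delta,\epsilon,n,R$.

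With the barrier in hand I conclude as follows. For the given solution $u$, which is bounded on $\overline{B_R(x_0)}$, I choose $r_0<R$ close enough to $R$ that $w\geq \|u\|_{L^\infty}$ on the sphere $\{s=r_0\}$, which is possible since $w\to\infty$ at the boundary; comparison on $B_{r_0}(x_0)$ then gives $u\leq w$ there, and in particular $u(x_0)\leq w(x_0)=A R^{-2\beta}$. The cutoff $r_0$ depends on $u$, but the resulting bound $A R^{-2\beta}$ does not. For the lower bound I apply the identical reasoning to $v=-u$, which solves $-\Delta v+G(v)=0$ with $G(v)=-F(-v)$; here $G(0)=0$, $G'(v)=F'(-v)\geq 0$, and for $v\geq 0$ the monotonicity of $F$ together with \eqref{F_2} gives $G(v)\geq \delta v^{1+\epsilon}$, so the same barrier bounds $-u(x_0)$ from above. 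Combining the two estimates yields $|u(x_0)|\leq C$ with $C=A R^{-2\beta}$ independent of $f$. The main obstacle is precisely the barrier construction: one must match the singular exponents so that the superlinear term $\delta w^{1+\epsilon}$ dominates the worst term $\eta^{-\beta-2}$ of $\Delta w$, while keeping $w$ regular and finite at the centre; choosing an even radial profile in $R^2-s^2$ resolves both requirements at once.
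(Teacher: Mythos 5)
Your proof is correct, but it takes a genuinely different route from the paper. The paper first squeezes $u$ between the two solutions $v_\pm$ with constant boundary data $\pm\lambda$, observes that $v_\pm$ is radial, and then runs an ODE analysis: multiplying the radial equation by $v'$ gives the first-integral bound $0\leq v'\leq\sqrt{2}\sqrt{v}\sqrt{F(v)}$, which is fed back into the equation to get $v''\geq\tfrac{\delta}{2}v^{1+\epsilon}$ on $[\tfrac12,1)$, and the uniform bound on $v(\tfrac12)$ then comes from the convergence of the Keller--Osserman integral $\int^\infty\bigl(s^{2+\epsilon}-c\bigr)^{-1/2}\,ds$. You instead build the classical Osserman boundary blow-up barrier $w(x)=A\,(R^2-|x|^2)^{-2/\epsilon}$ directly: your exponent bookkeeping ($\beta(1+\epsilon)=\beta+2$ when $\beta=2/\epsilon$) and the choice $\delta A^{\epsilon}\geq\beta R^2(2n+4\beta+4)$ are right, your weak comparison argument on the set $\{u>w\}$ uses exactly the monotonicity \eqref{F_1}, and the reduction of the lower bound to the reflected nonlinearity $G(t)=-F(-t)$ is handled correctly. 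What your approach buys: it bypasses both the reduction to constant boundary data and the radial/ODE machinery, and it yields the stronger pointwise interior estimate $|u(x)|\leq A\,(R^2-|x|^2)^{-2/\epsilon}$ at every interior point, with a fully explicit constant. What the paper's approach buys: its intermediate objects $v_\pm$ (solutions with constant data $\pm\lambda$) are reused verbatim in the proof of the subsequent comparison lemma for general $a(x,u)$, so the two lemmas share one structure; also, its ODE argument never needs $w$ to be a supersolution on the whole ball, only information along radii, which is why it tolerates the slightly weaker pointwise formulation of \eqref{F_2}. Both proofs hinge on the same superlinearity phenomenon, so the difference is one of technique, not of substance.
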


\begin{proof}
	Without loss of generality we set $x_0=0$, $R=1$ and write $B=B_1(0)$. Let $\lambda>0$ be chosen sufficiently large so that 
	$$ |f| \leq \lambda, \quad \forall\, x\in \pd B.$$	
	Using the maximum principle for the semilinear equation \eqref{eq_F_u}, see e.g. \cite[Chapter 10]{GT}, there holds
	$$ v_-(x)\leq u(x)\leq v_+(x) \quad \forall \, x\in B,$$
	where $v_\pm$ is the unique solution to 
	\bel{eq_F_u}
	\left\{
	\begin{array}{ll}
		-\Delta v_\pm+F(v_\pm)=0  & \mbox{in}\ B ,\\
		v_\pm= \pm \lambda &\mbox{on}\ \partial B,
	\end{array}
	\right.
	\ee
	In order to complete the proof of the lemma, it suffices to show that $v_\pm(0)$ is uniformly bounded in $\lambda$. We prove this in the case of $v_+(0)$. The case of $v_-(0)$ follows analogously. For simplicity of notation, we write  $v= v_+$ and note that $v$ must be radial and that it satisfies the following ODE:
	\begin{equation}
		\label{ODE}
		-v''(r) - \frac{n-1}{r}v'(r) + F(v(r)) =0 \quad r\in (0,1),
	\end{equation}
	subject to 
	$$ v'(0)=0 \quad \text{and}\quad v(1)=\lambda>0.$$
	In view of the maximum principle for \eqref{eq_F_u} we deduce that $v(r)$ must be non-negative. Moreover, equation \eqref{ODE} implies that 
	$$ r^{-(n-1)} \frac{d}{d r}(r^{n-1} \frac{d v}{d r}) = F(v(r)) \geq 0.$$
	Thus, there holds, 
	$$ v(r)\geq 0, \quad  v'(r)\geq 0\quad \forall\, r\in [0,1].$$
	Let $G(\mu):= \int_0^\mu F(s)\,ds$, and note that in view of \eqref{F_1} there holds
	\begin{equation}
		\label{G_eq}
		0\leq G(\mu)\leq \mu F(\mu) \quad \forall\, \mu \geq 0. 
	\end{equation}
	Multiplying \eqref{ODE} with $v'(r)$ and integrating from $0$ to $r$, we obtain
	$$
	-\frac{1}{2} v'(r)^2 -\int_0^r\frac{n-1}{s}v'(s)^2\,ds + G(v(r))=0 \quad \forall\, r\in (0,1]. 	 
	$$
	Therefore, 
	\begin{equation}
		\label{v_bound_1}
		0\leq v'(r) \leq \sqrt{2} \sqrt{G(v(r))}\leq \sqrt{2}\, \sqrt{v(r)}\, \sqrt{F(v(r))}.
	\end{equation}
	Plugging the latter bound into \eqref{ODE}, we write
	\bel{v_2der_est} v''(r) \geq \sqrt{F(v(r))}\left(\sqrt{F(v(r))} -\sqrt{2}\frac{n-1}{r}\sqrt{v(r)}\right) \quad \forall\, r\in (0,1).\ee
	In view of \eqref{F_2}, we define $\mu_0>0$ depending only on $\epsilon,\delta>0$ such that
	$$ \frac{F(\mu)}{\mu} \geq 32\,(n-1)^2\quad \forall\, \mu\geq \mu_0,$$
	and note that
	$$\sqrt{F(\mu)} -2\sqrt{2}(n-1)\sqrt{\mu} \geq \frac{1}{2}\sqrt{F(\mu)}\quad \forall\, \mu\geq \mu_0.$$
	Let us assume for now that 
	\begin{equation}\label{assume}
		v(0)>\mu_0.\end{equation}
	Then, it follows from the preceding calculation  together with the fact that $v(r)> \mu_0$ for all $r\in [0,1]$ and \eqref{F_2} that there holds
	$$v''(r) \geq \frac{1}{2} F(v(r))\geq \frac{\delta}{2}\, v(r)^{1+\epsilon} \quad \forall\, r\in [\frac{1}{2},1).$$
	Multiplying the latter equation  with $v'$ and integrating from $r=\frac{1}{2}$ to $r\in (\frac{1}{2},1)$ we write
	$$ v'(r)^2 \geq \frac{\delta}{2+\epsilon}\,(v(r)^{2+\epsilon}- v(\frac{1}{2})^{2+\epsilon})\quad \forall\, r\in (\frac{1}{2},1).$$
	Therefore,
	\bel{v_der_est} \frac{v'(r)}{\sqrt{ v(r)^{2+\epsilon}- v(\frac{1}{2})^{2+\epsilon}}} \geq \sqrt{\frac{\delta}{2+\epsilon}}  \quad \forall\, r\in (\frac{1}{2},1).\ee
	Integrating \eqref{v_der_est} from $r=\frac{1}{2}$ to $r=1$ we deduce that there holds 
	$$ \int_{v(\frac{1}{2})}^{\infty} \frac{1}{\sqrt{s^{2+\epsilon}-v(\frac{1}{2})^{2+\epsilon}}}\,ds> \int_{v(\frac{1}{2})}^{v(1)} \frac{1}{\sqrt{s^{2+\epsilon}-v(\frac{1}{2})^{2+\epsilon}}}\,ds \geq \frac{1}{2}\, \sqrt{\frac{\delta}{2+\epsilon}}.$$
	Observing that
	$$ \int_{v(\frac{1}{2})}^{\infty} \frac{1}{\sqrt{s^{2+\epsilon}-v(\frac{1}{2})^{2+\epsilon}}}\,ds= v(\frac{1}{2})^{-\frac{\epsilon}{2}} \int_1^{\infty} \frac{1}{\sqrt{s^{2+\epsilon}-1}}\,ds,$$
	we conclude that 
	$$ v(\frac{1}{2})^{\epsilon} \leq 2\sqrt{\frac{(2+\epsilon)}{\delta}}\, \int_1^{\infty} \frac{1}{\sqrt{s^{2+\epsilon}-1}}\,ds,.$$ 
	The integral on the right hand side of the above inequality is convergent. Therefore,
	$$v(0)\leq v(\frac{1}{2})\leq C_0,$$ 
	for some constant $C_0$ that is independent of $\lambda$. Recall that we assumed \eqref{assume} to derive the latter bound and therefore we may write in general that
	$$ v(0) \leq \max\{\mu_0,C_0\}.$$
\end{proof}

\begin{lem}
	\label{lem_comp}
	Let $M=B_R(x_0)$ be the closed ball of radius $R>0$ centred at the point $x_0\in \R^n$. Let $a \in \mathcal C^2(M\times \R)$ satisfy \eqref{a_bound_1}--\eqref{a_bound_2} for some $F\in \mathcal C^2(\R)$ that satisfies \eqref{F_1}-\eqref{F_2}. Given any $f\in C^{2+\alpha}(\pd M)$, let $u\in C^{2+\alpha}(M)$ be the unique solution to \eqref{eq1} with Dirichlet data $f$. Then, 
	$$ |u(x_0)| \leq C,$$
	for some constant $C>0$ that is independent of $f$.
\end{lem}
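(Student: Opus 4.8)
The plan is to sandwich $u$ between the two radial solutions of the pure $F$-equation whose center values were already shown to be uniformly bounded in the preceding lemma. Concretely, set $\lambda=\norm{f}_{C(\partial M)}=\sup_{\partial M}|f|$, so that $-\lambda\leq f\leq \lambda$ on $\partial M$, and let $v_\pm\in C^{2+\alpha}(M)$ solve $-\Delta v_\pm+F(v_\pm)=0$ in $M=B_R(x_0)$ with $v_\pm=\pm\lambda$ on $\partial M$. The preceding lemma (applied to the ball $B_R(x_0)$) gives $|v_\pm(x_0)|\leq C$ with $C$ independent of $\lambda$, hence of $f$; moreover the maximum-principle argument used there also yields the sign information $v_+\geq 0$ and $v_-\leq 0$ throughout $M$, which is what will let me invoke \eqref{a_bound_1} and \eqref{a_bound_2}.

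The heart of the matter is the comparison $v_-\leq u\leq v_+$ on $M$. To prove $u\leq v_+$, set $w=u-v_+$ and let $\Omega=\{x\in M:w(x)>0\}$, an open subset of $M$ on whose boundary $w\leq 0$ (indeed $w=0$ at interior boundary points by continuity, while $w=f-\lambda\leq 0$ on $\partial\Omega\cap\partial M$). On $\Omega$ we have $u>v_+\geq 0$, so $u>0$; then \eqref{a_bound_1} gives $a(x,u)\geq F(u)$, and the monotonicity of $F$ from \eqref{F_1} together with $u>v_+$ gives $F(u)\geq F(v_+)$. Using the equations for $u$ and $v_+$ this yields
\[
-\Delta w \;=\; F(v_+)-a(x,u)\;\leq\; F(v_+)-F(u)\;\leq\; 0 \qquad \text{on }\Omega .
\]
Thus $w$ is subharmonic on $\Omega$ with $w\leq 0$ on $\partial\Omega$, so the maximum principle forces $w\leq 0$ on $\Omega$; this contradicts the definition of $\Omega$ unless $\Omega=\emptyset$, i.e. $u\leq v_+$. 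The lower bound $u\geq v_-$ is entirely symmetric: on $\{v_->u\}$ one has $u<v_-\leq 0$, so \eqref{a_bound_2} gives $a(x,u)\leq F(u)\leq F(v_-)$ and the same subharmonicity/maximum-principle argument applies. Evaluating at the center then gives $-C\leq v_-(x_0)\leq u(x_0)\leq v_+(x_0)\leq C$, which is the claimed bound.

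The only genuinely delicate point is this comparison step. The classical comparison principle for a semilinear equation $-\Delta u+a(x,u)=0$ would require $\mu\mapsto a(x,\mu)$ to be monotone, which is not literally assumed in the statement. I avoid this by exploiting that on the offending set $\{u>v_+\}$ the solution has a definite sign ($u>0$), so the one-sided bound \eqref{a_bound_1} lets me replace $a(x,u)$ by $F(u)$, after which the monotonicity of $F$ alone closes the estimate; the negative side uses \eqref{a_bound_2} in the same way. Consequently I would take care to record the sign facts $v_+\geq 0$, $v_-\leq 0$ explicitly (they are the link between the comparison and hypotheses \eqref{a_bound_1}--\eqref{a_bound_2}), and to note that the constant $C$ produced depends only on $F$, $R$ and $n$, and in particular not on $f$, which is exactly what is needed when this lemma is later applied on balls $B_h(x)\subset M$ to prove Theorem~\ref{comp_thm}.
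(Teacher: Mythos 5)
Your proof is correct, but the key comparison step is run along a genuinely different route from the paper's. The paper sandwiches $u$ in two stages: first $u_-\leq u\leq u_+$, where $u_\pm$ solve the semilinear $a$-equation with constant boundary data $\pm\lambda$ (a standard semilinear comparison principle), and then $v_-\leq u_-$ and $u_+\leq v_+$ by writing the difference of the nonlinearities as $c(x)\,(u_+-v_+)$ with $c(x)=\int_0^1\partial_\mu a\bigl(x,(1-s)v_+(x)+su_+(x)\bigr)\,ds\geq 0$ and applying the linear maximum principle. Both stages lean on the monotonicity $\partial_\mu a\geq 0$ of \eqref{cond3}, which holds in the setting of Theorem~\ref{comp_thm} although it is not restated in the lemma. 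You instead compare $u$ with $v_\pm$ in one step, by restricting attention to the open set where the desired inequality fails: there the solution has a definite sign, so the one-sided bounds \eqref{a_bound_1}--\eqref{a_bound_2} let you replace $a(x,u)$ by $F(u)$, and the monotonicity of $F$ from \eqref{F_1} makes $u-v_+$ (respectively $v_--u$) subharmonic on that set with nonpositive boundary values, whence the weak maximum principle empties it. The payoff of your version is that it never invokes \eqref{cond3}: it proves the lemma literally as stated, for any solution $u$, from \eqref{a_bound_1}--\eqref{a_bound_2} and \eqref{F_1} alone, so it is slightly more general and self-contained; the paper's version is shorter because it can cite the classical comparison principles outright. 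Both arguments conclude identically from the preceding lemma's uniform bound $|v_\pm(x_0)|\leq C$ together with the sign facts $v_+\geq 0\geq v_-$, which you correctly single out as the hinge between the comparison and the hypotheses on $a$.
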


\begin{proof}
	As in the previous lemma, let $\lambda>0$ be sufficiently large so that 
	$$ |f| \leq \lambda, \quad \forall\, x\in \pd B_R(x_0).$$	
	There holds
	$$ u_-(x)\leq u(x)\leq u_+(x) \quad \forall \, x\in B_R(x_0),$$
	where $u_\pm$ is the unique solution to 
	\bel{eq_u_pm}
	\left\{
	\begin{array}{ll}
		-\Delta u_\pm+a(x,u_\pm)=0  & \mbox{in}\ B_R(x_0) ,\\
		u_\pm= \pm \lambda &\mbox{on}\ \partial B_R(x_0),
	\end{array}
	\right.
	\ee
	Let us denote by $v_\pm$ the unique solution to 
	\bel{eq_F_v}
	\left\{
	\begin{array}{ll}
		-\Delta v_\pm+F(v_\pm)=0  & \mbox{in}\ M=B_R(x_0) ,\\
		v_\pm= \pm\lambda &\mbox{on}\ \partial M,
	\end{array}
	\right.
	\ee
	Observe that $v_+\geq 0$ on $M$ while $v_- \leq 0$ on $M$. We claim that
	\begin{equation}
		\label{comp_iden}
		v_-(x) \leq u_-(x) \quad \text{and} \quad u_+(x)\leq v_+(x) \quad \forall\, x\in M.
	\end{equation}
	We prove the bound $u_+\leq v_+$ on $M$. The bound $v_-\leq u_-$ can be proved analogously. Let us start by noting that
	$$ -\Delta v_+ +a(x,v_+) = -F(v_+)+a(x,v_+)\geq 0=-\Delta u_+ + a(x,u_+) \quad \text{on $M$}.$$  
	Therefore,
	$$ -\Delta (u_+-v_+) +(u_+-v_+)\, c\,  \leq 0,\quad \text{on $M$},$$
	where
	$$ c(x) = \int_0^1 \pd_\mu a(x,(1-s) v_+(x)  + su_+(x))\,ds \geq 0\quad \forall\, x\in M.$$
	Using the Maximum principle, we deduce that $u_+\leq v_+$ on $M$. This concludes the proof of \eqref{comp_iden}. Combining the bound \eqref{comp_iden} with the previous lemma, it follows that
	$$ |u(x)| \leq C \quad \forall\, \lambda\geq 0,$$
	for some constant $C>0$ independent of $\lambda$ and thus independent of $f$. 
\end{proof}

We close this section by noting that the proof of Theorem~\ref{comp_thm} follows trivially from the preceding lemma.

\section{Well-posedness of the forward problem}

This section is devoted to the study of the forward problem \eqref{eq1} when the semilinear term $a\in  C^2(M\times\R)$ satisfies condition \eqref{cond1}-\eqref{cond2}. More precisely, we show the well-posedness of \eqref{eq1} for solution that depend continuisly on the boundary data $f$. Our result can be stated as follows.

\begin{prop}\label{p1} Let $a\in  C^2(M\times\R)$ satisfy condition \eqref{cond1}-\eqref{cond2}. Then there exists $\epsilon>0$ depending on  $(M,g)$ and $a$ such that, for $f\in  B_{\epsilon}$, problem \eqref{eq1} admits a unique solution $u_f\in C^{2+\alpha}(M)$ such that $B_{\epsilon}\ni f\mapsto u_f\in C^{2+\alpha}(M)$ is continuous. Moreover, this unique solutions satisfies the estimate
\bel{p1a}\norm{u_f}_{\mathcal C^{2+\alpha}(M)}\leq C\norm{f}_{\mathcal C^{2+\alpha}(\partial M)}.\ee
\end{prop}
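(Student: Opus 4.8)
The plan is to recast \eqref{eq1} as a fixed-point problem for small data and apply the implicit function theorem (or a contraction mapping argument) in Hölder spaces. The key structural observation is that under \eqref{cond1} we have $a(x,0)=0$, so $u\equiv 0$ solves \eqref{eq1} with $f=0$; we then seek to perturb around this trivial solution. Writing $q(x):=\partial_\mu a(x,0)$, the linearization of the equation at $u=0$ is the operator $L v := -\Delta_g v + q\, v$, and condition \eqref{cond2} says precisely that the associated Dirichlet problem for $L$ has trivial kernel in $H^1_0(M)$, i.e.\ $L$ is injective on $H^1_0(M)$. Since $q\in C^\alpha(M)$ (indeed $q$ is as smooth as $a$ allows), elliptic theory gives that $L: C^{2+\alpha}(M)\cap\{v|_{\partial M}=0\}\to C^\alpha(M)$ is an isomorphism onto its range, and the injectivity from \eqref{cond2} upgrades to bijectivity of the boundary value problem $L v = h$ in $M$, $v = g$ on $\partial M$, with the estimate $\|v\|_{C^{2+\alpha}(M)}\le C(\|h\|_{C^\alpha(M)} + \|g\|_{C^{2+\alpha}(\partial M)})$.

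Next I would set up the map whose zero gives a solution. Define $\Phi: C^{2+\alpha}(\partial M)\times C^{2+\alpha}(M)\to C^\alpha(M)\times C^{2+\alpha}(\partial M)$ by
\[
\Phi(f,u) = \bigl(-\Delta_g u + a(x,u),\; u|_{\partial M} - f\bigr).
\]
Then $\Phi(0,0)=(0,0)$ by \eqref{cond1}, and $\Phi$ is $C^1$ (in fact $C^2$) in $u$ because $a\in C^2(M\times\R)$ and Nemytskii-type substitution operators are differentiable between these Hölder spaces, with derivative at $u=0$ in the $u$-direction given exactly by $(v\mapsto L v,\; v\mapsto v|_{\partial M})$. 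By the isomorphism established above, the partial derivative $\partial_u\Phi(0,0)$ is a linear isomorphism from $C^{2+\alpha}(M)$ onto $C^\alpha(M)\times C^{2+\alpha}(\partial M)$. The implicit function theorem on Banach spaces then yields $\epsilon>0$ and a $C^1$ map $f\mapsto u_f$ from $B_\epsilon=\{f : \|f\|_{C^{2+\alpha}(\partial M)}<\epsilon\}$ into $C^{2+\alpha}(M)$ with $\Phi(f,u_f)=0$, i.e.\ $u_f$ solves \eqref{eq1}, and uniqueness holds in a neighborhood of $0$ in $C^{2+\alpha}(M)$. Continuity of $f\mapsto u_f$ is immediate from the $C^1$ conclusion.

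For the linear bound \eqref{p1a}, I would differentiate the identity $\Phi(f,u_f)=0$ at $f=0$: since $u_0=0$, the derivative $w_0 := \frac{d}{d\tau}\big|_{\tau=0} u_{\tau f}$ satisfies $L w_0 = 0$ in $M$ and $w_0 = f$ on $\partial M$, so $\|w_0\|_{C^{2+\alpha}(M)}\le C\|f\|_{C^{2+\alpha}(\partial M)}$ by the isomorphism estimate. Writing $u_f = w_0 + r_f$ where the remainder $r_f$ collects the quadratic-and-higher terms in $f$, the $C^1$ (indeed $C^2$) regularity of the solution map forces $\|r_f\|_{C^{2+\alpha}(M)}\le C\|f\|_{C^{2+\alpha}(\partial M)}^2$, so after shrinking $\epsilon$ the linear term dominates and \eqref{p1a} follows with a possibly larger constant. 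Alternatively one can absorb the nonlinear part directly in the fixed-point estimate without invoking second differentiability.

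The main obstacle, and the place that deserves care rather than routine citation, is the precise functional-analytic setting: one must verify that the substitution (Nemytskii) operator $u\mapsto a(\cdot,u(\cdot))$ genuinely maps $C^{2+\alpha}(M)$ into $C^\alpha(M)$ and is continuously (Fréchet) differentiable there, which uses $a\in C^2$ in an essential way, and one must confirm that \eqref{cond2}, stated as a variational condition in $H^1_0(M)$, truly guarantees invertibility of $L$ in the Hölder scale. The latter passes through Fredholm theory for $L$ on $H^1_0(M)$: \eqref{cond2} rules out a nontrivial kernel, self-adjointness of $L$ rules out a cokernel, so the weak problem is uniquely solvable, and then elliptic regularity (Schauder estimates) promotes the $H^1$ solution to $C^{2+\alpha}$ with the quantitative bound. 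Once these two ingredients are in hand, the implicit function theorem delivers the statement cleanly; no delicate a priori estimates beyond standard Schauder theory are required because we work only for small data near the trivial solution.
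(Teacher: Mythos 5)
Your treatment of existence, continuous dependence, and the estimate \eqref{p1a} is essentially the paper's own proof: the paper applies the implicit function theorem to exactly the map you call $\Phi$ (denoted $\mathcal G$ there), verifies that $u\mapsto a(\cdot,u)$ is a $C^1$ substitution operator from $C^{2+\alpha}(M)$ to $C^{\alpha}(M)$ (citing H\"ormander), inverts the Dirichlet problem for $-\Delta_g+\partial_\mu a(\cdot,0)$ using \eqref{cond2} plus elliptic regularity (citing Ladyzhenskaya--Ural'tseva; same content as your Fredholm-plus-Schauder route), and obtains \eqref{p1a} from $\psi(0)=0$ together with $C^1$-regularity of the solution map. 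One caution: your primary derivation of \eqref{p1a}, via $u_f=w_0+r_f$ with $\norm{r_f}\leq C\norm{f}^2$, tacitly requires the solution map to be twice differentiable, which is not clear when $a$ is only $C^2$ (the second derivative of the substitution operator involves $\partial_\mu^2 a(\cdot,u)$, which need not belong to $C^\alpha(M)$); your hedged alternative, or simply the mean value inequality for the $C^1$ map $f\mapsto u_f$ vanishing at $f=0$, is what the paper uses, so this is a cosmetic repair.

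The genuine gap is the uniqueness step. The proposition asserts uniqueness within the class of solutions that depend continuously on $f$ over all of $B_\epsilon$, whereas the implicit function theorem gives uniqueness only among solutions lying in a fixed small ball $V\subset C^{2+\alpha}(M)$; these classes differ, because a competing continuous family $f\mapsto\tilde u_f$ of solutions is not known a priori to take values in $V$ for every $f\in B_\epsilon$. The paper devotes the entire second half of its proof to exactly this point: if $u_1,u_2$ are two such families, the difference solves $-\Delta_g u+q_fu=0$ with $u|_{\partial M}=0$, where $q_f=\int_0^1\partial_\mu a\bigl(\cdot,su_1+(1-s)u_2\bigr)\,ds$; continuity forces $\norm{q_f-\partial_\mu a(\cdot,0)}_{L^\infty(M)}\to 0$ as $\norm{f}_{C^{2+\alpha}(\partial M)}\to 0$, and then the Min-Max principle together with $0\notin\sigma\bigl(-\Delta_g+\partial_\mu a(\cdot,0)\bigr)$ (a consequence of \eqref{cond2}) yields $0\notin\sigma(-\Delta_g+q_f)$ for all small $f$, hence $u_1=u_2$ on some $B_\epsilon$. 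To close your argument you would need either this spectral-perturbation step or a connectedness (open--closed) continuation argument showing that a continuous competing family agreeing with $u_f$ at $f=0$ must remain in $V$ throughout $B_\epsilon$; as written, the single clause ``uniqueness holds in a neighborhood of $0$'' establishes a different statement from the one claimed.
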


\begin{proof} We prove this result by extending the approach of \cite[Theorem B.1.]{CFKKU}, where a similar boundary value problem has been studied, to semilinear terms satisfying \eqref{cond1}-\eqref{cond2}. For this purpose, we introduce the map $\mathcal G$ from $\mathcal C^{2+\alpha}(\partial M)\times\mathcal C^{2+\alpha}(M)$ to the space $\mathcal C^{\alpha}(M)\times\mathcal C^{2+\alpha}(\partial M)$ defined by
$$\mathcal G: (f,u)\mapsto\left(-\Delta_gu+a(x,u), u_{|\partial\Omega}-f\right).$$
 Using the fact that $a\in \mathcal C^2(M\times\R)$ and applying \cite[Theorem  A.8]{H}, we deduce that for any $u\in \mathcal C^{2+\alpha}(M)$ we have $x\mapsto a(x,u(x)), x\mapsto\partial_\mu a(x,u(x))\in \mathcal C^{\alpha}(M)$. Therefore,  the map
$$\mathcal C^{2+\alpha}(M)\ni u\mapsto -\Delta_gu+a(x,u)\in \mathcal C^{\alpha}(M)$$
is $\mathcal C^1$. It follows  that the map $\mathcal G$ is  $\mathcal C^1$ from $\mathcal C^{2+\alpha}(\partial M)\times\mathcal C^{2+\alpha}(M)$ to the space $\mathcal C^{\alpha}(M)\times\mathcal C^{2+\alpha}(\partial M)$. Moreover, \eqref{cond1} implies that $\mathcal G(0,0)=(0,0)$ and
$$\partial_u\mathcal G(0,0)v=\left(-\Delta_g v+\partial_\mu a(x,0)v, v_{|\partial M}\right).$$
Fix $(F,f)\in \mathcal C^{\alpha}(M)\times\mathcal C^{2+\alpha}(\partial M)$ and consider the following linear boundary value problem
\bel{eqa1}
\left\{
\begin{array}{ll}
-\Delta_g v+\partial_\mu a(x,0)v=F  & \mbox{in}\ M ,
\\
v=f &\mbox{on}\ \partial M.
\end{array}
\right.\ee
In light of \eqref{cond1}, problem \eqref{eqa1} admits a unique solution $v\in H^1(M)$. Moreover, using the fact that $\partial_\mu a(\cdot,0)\in C^1(M)$ and applying \cite[Theorem 16.1]{LU} and  
\cite[Theorem 12.1]{LU} we deduce that $v\in \mathcal C^{2+\alpha}( M)$ satisfies the estimate
$$\norm{v}_{\mathcal C^{2+\alpha}(M)}\leq C(\norm{F}_{\mathcal C^{\alpha}(M)}+\norm{f}_{\mathcal C^{2+\alpha}(\partial M)}),$$
with $C>0$ depending only on $a$ and $(M,g)$. Thus, $\partial_u\mathcal G(0,0)$ is an isomorphism from $\mathcal C^{2+\alpha}(M)$ to $\mathcal C^{\alpha}(M)\times\mathcal C^{2+\alpha}(\partial M)$ and, applying the implicit function theorem, we deduce that there exists $\epsilon_1>0$ depending on  $a$, $(M,g)$, and a $\mathcal C^1$ map $\psi$ from $ B_{\epsilon_1}$ to $\mathcal C^{2+\alpha}(M)$, such that, for all $f\in  B_{\epsilon_1}$, we have 
$\mathcal G(f,\psi(f))=(0,0)$.
This proves that, for all $f\in  B_{\epsilon_1}$, $u_f=\psi(f)$ is a solution of \eqref{eq1}. Moreover, using the fact that $\psi$ is $C^1$ from $ B_{\epsilon_1}$ to $\mathcal C^{2+\alpha}(M)$ we deduce that map $f\mapsto u_f$ is also  $C^1$ from $ B_{\epsilon_1}$ to $\mathcal C^{2+\alpha}(M)$. Then, the estimate \eqref{p1a} follows from the fact that $\psi(0)=0$ by definition of the map $\psi$.

The above arguments  prove the existence of solutions of \eqref{eq1} for $f\in  B_{\epsilon_1}$ such that $f\mapsto u_f$ is continuous from $ B_{\epsilon_1}$ to $\mathcal C^{2+\alpha}(M)$. In order to complete the proof of the proposition, we need to  show that there exists $\epsilon\in(0,\epsilon_1]$ such that, for all $f\in  B_{\epsilon}$, such solution is unique. For this purpose, we fix $f\in  B_{\epsilon_1}$ and $u_j\in\mathcal C^{2+\alpha}(M)$, $j=1,2$, solving \eqref{eq1} such that $f\mapsto u_j$ is continuous from $ B_{\epsilon_1}$ to $\mathcal C^{2+\alpha}(M)$. Then, we consider $u=u_1-u_2$ and we notice that $u$ solves  the following linear problem
\bel{eqa2}
\left\{
\begin{array}{ll}
-\Delta_g u+q_fu=0  & \mbox{in}\ M ,
\\
u=0 &\mbox{on}\ \partial M,
\end{array}
\right.\ee
with 
$$q_f(x)=\int_0^1\partial_\mu a(x,su_1(x)+(1-s)u_2(x))ds,\quad x\in M.$$
Using the fact that $a\in  C^2(M\times\R)$ and applying \eqref{p1a}, we deduce that
$$\norm{q_f-\partial_\mu a(\cdot,0)}_{L^\infty(M)}\leq C(\norm{u_1}_{L^\infty(M)}+\norm{u_2}_{L^\infty(M)}).$$
Combining this with the continuity of the map $B_{\epsilon}\ni f\mapsto u_j\in C^{2+\alpha}(M)$, $j=1,2$, we find that 
\bel{p1h}\norm{q_f-\partial_\mu a(\cdot,0)}_{L^\infty(M)}\to 0\textrm{ as }\norm{f}_{\mathcal C^{2+\alpha}(\partial M)}\to 0.\ee
 Now consider the operators $A=-\Delta_g+\partial_\mu a(\cdot,0)$ and $A_f=-\Delta_g+q_f$  with domain $D(A)=D(A_f)=H^1_0(M)\cap H^2(M)$ and consider $\sigma(A)$ (resp. $\sigma(A_f)$) the spectrum of $A$ (resp. $A_f$). In light of \eqref{cond2}, it is clear that $0\not\in\sigma(A)$. Moreover, \eqref{p1h} and the  Min-Max principle imply that   there exists $\epsilon\in(0,\epsilon_1)$ such that 
$$0\not\in \sigma(A_f),\quad f\in B_\epsilon.$$
 This property guaranties the unique solvability of \eqref{eqa2} and, for $f\in B_\epsilon$, we have $u\equiv0$ and $u_1=u_2$.
From this last property we deduce the uniqueness  of solutions $u_f$ of \eqref{eq1} satisfying the continuity property of the map $f\mapsto u_f$.\end{proof}

In view of Proposition \ref{p1}, we know that
$$\{\lambda>0:\ \textrm{for all $f\in B_{\lambda}$ problem \eqref{eq1} is well-posed in $ C^{2+\alpha}( M)$}\}\neq\emptyset.$$
Therefore, we can define $\lambda_1$ given by \eqref{lambda}.

\section{Proof of Theorem \ref{t1}}
We fix $\mu_1>0$ and we assume that \eqref{t1b} is fulfilled. For $\mu\in(-\mu_1,\mu_1)$, we consider $v_\mu$ and $w_\mu$ respectively the solutions of 
$$
\left\{
\begin{array}{ll}
-\Delta_g v_\mu+a(x,v_\mu)=0  & \mbox{in}\ M ,\\
v_\mu= \mu f&\mbox{on}\ \partial M,
\end{array}
\right.$$
$$
\left\{
\begin{array}{ll}
-\Delta_g w_\mu=0  & \mbox{in}\ M ,\\
w_\mu= h_\mu&\mbox{on}\ \partial M.
\end{array}
\right.$$
Fixing $y_\mu=w_\mu -v_\mu$ and applying \eqref{t1b} we deduce that $y_\mu$ satisfies the following condition
\bel{eq4}
\left\{
\begin{array}{ll}
\Delta_g y_\mu=-a(x,v_\mu)  & \mbox{in}\ M ,\\
\partial_\nu y_\mu= 0&\mbox{on}\ \partial M.
\end{array}
\right.
\ee
Integrating this last equation, we deduce that
$$0=\int_{\partial M}\partial_\nu y_\mu d\sigma_g(x)=\int_M\Delta_g y_\mu dV_g(x)=-\int_Ma(x,v_\mu(x))dV_g(x).$$
Therefore, we have
\bel{t1d}\int_Ma(x,v_\mu(x))dV_g(x)=0,\quad \mu\in(-\mu_1,\mu_1).\ee
On the other hand, fixing $v_\mu^{(1)}=\partial_\mu v_\mu$, $\mu\in(-\mu_1,\mu_1)$, we deduce that $v_\mu^{(1)}$ solves the problem
$$\left\{
\begin{array}{ll}
\Delta_g v_\mu^{(1)}+\partial_\mu a(x,v_\mu)v_\mu^{(1)}=0  & \mbox{in}\ M ,\\
v_\mu^{(1)}= f&\mbox{on}\ \partial M
\end{array}
\right.$$
and \eqref{cond3} combined with the maximum principle implies that $v_\mu^{(1)}\geq0 $. Moreover, in light of \eqref{cond1} we have $v_\mu\equiv0$ for $\mu=0$ and we deduce that $v_\mu\geq0$ for $\mu\in[0,\mu_1)$ and $v_\mu\leq0$ for $\mu\in(-\mu_1,0]$. Combining this with \eqref{cond1} and \eqref{cond3} we deduce that
$$a(x,v_\mu(x))\geq 0,\quad (x,\mu)\in \Omega\times[0,\mu_1),$$
$$a(x,v_\mu(x))\leq 0,\quad (x,\mu)\in \Omega\times(-\mu_1,0],$$
which means in particular that, for all $\mu\in(-\mu_1,\mu_1)$, the map $x\mapsto a(x,v_\mu(x))$ is of constant sign and \eqref{t1d} implies that
\bel{t1e}a(x,v_\mu(x))=0,\quad (x,\mu)\in M\times(-\mu_1,\mu_1).\ee
Therefore, $v_\mu$ solves the problem
$$
\left\{
\begin{array}{ll}
-\Delta_g v_\mu=0  & \mbox{in}\ M ,\\
v_\mu= \mu f&\mbox{on}\ \partial M,
\end{array}
\right.$$
and by linearity of this equation we have
$$v_\mu(x)=\mu v_1(x),\quad (x,\mu)\in M\times(-\mu_1,\mu_1)$$ and condition \eqref{t1e} implies
\bel{t1f}a(x,\mu v_1(x))=0,\quad (x,\mu)\in M\times(-\mu_1,\mu_1).\ee
Moreover, the strong maximum principle and condition \eqref{t1a} implies that $v_1\geq c$ which combined with \eqref{t1f} clearly implies \eqref{t1c}. In addition applying \cite[Theorem 3.6]{GT}, we deduce that, for all $\mu\in(-\mu_1,\mu_1)$, $y_\mu$ is constant on $M$ and, for any $x_0\in\partial M$, we have
$$h_\mu(x_0)-\mu f(x_0)=y_\mu(x_0)=y_\mu(x)=h_\mu(x)-\mu f(x),\quad (x,\mu)\in \partial M\times(-\mu_1,\mu_1).$$
This last condition clearly implies \eqref{t1dd}.

\section{Extensions of Theorem \ref{t1}}
This section will be devoted to the proof of the extensions of the results of Theorem \ref{t1} stated in Theorem \ref{thm_analytic}, \ref{thm_analytic_2}, \ref{t2}, \ref{t3} and \ref{t4}.
\subsection{Proof of Theorem~\ref{thm_analytic}}

 Let $m\geq 2$ be the least positive integer such that $F^{(k)}(0)\neq 0$ for $k=m$ while it is zero for all $1\leq k<m$. We write
$$ a(x,s) = q(x) \frac{F^{(m)}(0)}{m!} s^m+q(x) \frac{F^{(m+1)}(0)}{(m+1)!} s^{m+1} + O(|s|^{m+2}),$$
and note in via of the above definition that $F^{(m)}(0)$ is assumed to be non-zero. 

Let $\epsilon \in \R$ be in  a neighborhood of the origin and let $u_\epsilon$ be the unique small solution to \eqref{eq1} subject to Dirichlet boundary data $f= \epsilon$. Then,

$$ u_\epsilon = \epsilon + (\sum_{k=m}^{2m-2}\epsilon^{k} v_k)+ \epsilon^{2m-1} w + O(|\epsilon^{2m}|),$$
where $v_{k}$, $k=m,\ldots, 2m-2$ and $w$ respectively solve the boundary value problems

\bel{eq_b_v}
\left\{
\begin{array}{ll}
	-\Delta_g v_k+q(x) \frac{F^{(k)}(0)}{k!}=0  & \mbox{in}\ M ,\\
	v_k= 0&\mbox{on}\ \partial M,
\end{array}
\right.
\ee

\bel{eq_b_w}
\left\{
\begin{array}{ll}
	-\Delta_g w+q(x) \frac{F^{(2m-1)}(0)}{(2m-1)!}+ \frac{F^{(m)}(0)}{(m-1)!}\,q(x)\,v_m(x)=0  & \mbox{in}\ M ,\\
	w= 0&\mbox{on}\ \partial M.
\end{array}
\right.
\ee
Note also that in view of \eqref{DN_a} there holds
\bel{vw_normal}
\pd_\nu v_k|_{\pd M} = \pd_\nu w|_{\pd M} =0 \quad k=m,\ldots,2m-2.
\ee
Integrating \eqref{eq_b_v} on $M$ we deduce that
\bel{iden_rand_1}
\int_M q(x)\, dV_g = 0.
\ee
Next, integrating \eqref{eq_b_w} on $M$ and using \eqref{iden_rand_1} we deduce that
\bel{iden_rand_2}
\int_M q(x) v_m(x)\,dV_g=0.
\ee 
Returning to \eqref{eq_b_v} with $k=m$ and multiplying this equation with $v_m$ and integrating on $M$ together with \eqref{vw_normal} and \eqref{iden_rand_2} we obtain that
$$ \int_M |\nabla_g v_m|_g^2\, dV_g =0.$$
This implies that $v_m=0$ on $M$ and consequently that $q=0$ on $M$ which implies that $a\equiv 0$. 

\subsection{Proof of Theorem~\ref{thm_analytic_2}}
We give a  proof by contradiction and suppose for contrary that $a(x,s)$ is not identically zero. Let $m\geq 1$ be the smallest integer such that $\pd^{2m}_s a(x,0)$ is not identically zero. In view of the fact that $a(x,s)$ is analytic and even in $s$, we write
$$ a(x,s) = \sum_{k=m}^{\infty} a_{2k}(x)\,s^{2k},$$
for some smooth sequence of functions $\{a_{2k}(x)\}_{k=m}^{\infty}$ on $M$, where $a_{2m}$ is assumed to be not identically zero. Next, let $\epsilon \in \R$ be in  a neighborhood of the origin and let $u_\epsilon$ be the unique small solution to \eqref{eq1} subject to Dirichlet boundary data $f= \epsilon$. Then,

$$ u_\epsilon = \epsilon + (\sum_{k=2m}^{4m-2}\epsilon^{k} v_k)+ \epsilon^{4m-1} w + O(|\epsilon|^{4m}),$$
where $v_{k}$ is zero if $k$ is odd and otherwise solves

\bel{eq_b_v_2}
\left\{
\begin{array}{ll}
	-\Delta_g v_k+ a_{k}=0  & \mbox{in}\ M ,\\
	v_k= 0&\mbox{on}\ \partial M,
\end{array}
\right.
\ee
for $k$ even and in the range $k=2m,\ldots,4m-2$. The function $w$ solves the boundary value problem
\bel{eq_b_w_2}
\left\{
\begin{array}{ll}
	-\Delta_g w+ 2m\,a_{2m}(x)\,v_{2m}(x)=0  & \mbox{in}\ M ,\\
	w= 0&\mbox{on}\ \partial M.
\end{array}
\right.
\ee
Note also that in view of \eqref{DN_a} there holds
\bel{vw_normal_2}
\pd_\nu v_k|_{\pd M} = \pd_\nu w|_{\pd M} =0 \quad k=2m,\ldots,4m-2.
\ee
Integrating \eqref{eq_b_w_2} on $M$ we deduce that
\bel{iden_rand__3}
\int_M a_{2m}(x) v_{2m}(x) dV_g = 0.
\ee
Returning to \eqref{eq_b_v_2} with $k=2m$ together with \eqref{vw_normal_2} the latter identity reduces to 
$$ \int_M |\nabla v_{2m}|^2\, dV_g =0.$$
This implies that $v_{2m}=0$ on $M$ and consequently that $a_{2m}=0$ on $M$ which is a contradiction.

\subsection{Proof of Theorem \ref{t2}}

Fix $\lambda\in(0,\lambda_1]$ and suppose that \eqref{t2b} is fulfilled. For any $\epsilon\in (0,\lambda)$,  $t\in[\epsilon-\lambda,\lambda-\epsilon]$, $s\in [0,\epsilon)$ and $h\in B_1$, we denote by $v_{t,s}$ and $w_{t,s}$ respectively the solutions of 
$$
\left\{
\begin{array}{ll}
-\Delta_g v_{t,s}+a(x,v_{t,s})=0  & \mbox{in}\ M ,\\
v_{t,s}= t +sh&\mbox{on}\ \partial M,
\end{array}
\right.$$
$$
\left\{
\begin{array}{ll}
-\Delta_g w_{t,s}+qw_{t,s}=0  & \mbox{in}\ M ,\\
w_{t,s}= t +sh&\mbox{on}\ \partial M,
\end{array}
\right.$$
In view of \eqref{t4b}, we have
$$\partial_\nu v_{t,s}(x)=\partial_\nu w_{t,s}(x),\quad t\in[\epsilon-\lambda,\lambda-\epsilon],\ s\in [0,\epsilon),\ h\in B_1.$$
Differentiating the above identity with respect to $s$ at $s=0$, we obtain
\bel{t2e} \partial_\nu v_{t}^{(1)}(x)=\partial_\nu w^{(1)}(x),\quad t\in[\epsilon-\lambda,\lambda-\epsilon],\ s\in [0,\epsilon),\ h\in B_1,\ x\in\partial M,\ee
with $v_{t}^{(1)}=\partial_{s}v_{t,s}|_{s=0}$ and  $w^{(1)}=\partial_{s}w_{t,s}|_{s=0}$. On the other hand, by the first order linearization, one can check that   $v_{t}^{(1)}$ and $w^{(1)}$ solve respectively the problems
\bel{t2f}
\left\{
\begin{array}{ll}
-\Delta_g v_{t}^{(1)}+\partial_\mu a(x,v_{t,0})v_{t}^{(1)}=0  & \mbox{in}\ M ,\\
v_{t}^{(1)}= h&\mbox{on}\ \partial M,
\end{array}
\right.\ee

$$
\left\{
\begin{array}{ll}
-\Delta_g w^{(1)}=0  & \mbox{in}\ M ,\\
w^{(1)}= h&\mbox{on}\ \partial M.
\end{array}
\right.$$
Fixing $y=w^{(1)}-v_{t}^{(1)}$, with $t=0$ and $h\equiv1$, and applying \eqref{t2e}, we obtain
\bel{t2f}\left\{
\begin{array}{ll}
\Delta_g y=-\partial_\mu a(x,v_{0,0})v_{0}^{(1)}  & \mbox{in}\ M ,\\
y=\partial_\nu y= 0&\mbox{on}\ \partial M.
\end{array}
\right.\ee
Integrating this formula, we get 
$$0=\int_{\partial M}\partial_\nu y d\sigma_g(x)=\int_M\Delta_g y dV_g(x)=-\int_M\partial_\mu a(x,v_{0,0})v_{0}^{(1)}(x)dV_g(x).$$
In addition,  condition \eqref{cond1} implies that $v_{0,0}\equiv0$ and we get
\bel{t2g}\int_M\partial_\mu a(x,0)v_{0}^{(1)}(x)dV_g(x)=0.\ee
Applying the maximum principle, we deduce that $v_{0}^{(1)}\geq0$ and, applying \eqref{t2a}, we obtain
$$\partial_\mu a(x,0)v_{0}^{(1)}(x)=0,\quad x\in M.$$
Moreover, the Harnack inequality and unique continuation results for elliptic equations imply that, for all $x\in M$, $v_{0}^{(1)}(x)>0$ and it follows that $\partial_\mu a(\cdot,0)\equiv0$. Thus, we have 
$$\lim_{t\to0}\norm{\partial_\mu a(\cdot,v_{t,0}(\cdot))}_{L^\infty(M)}=0$$
and applying  \eqref{t2e} and \cite[Theorem 2]{CFO}, we deduce that there exists $\lambda\in[0,\lambda_1)$ such that 
$$ \partial_\mu a(x,v_{t,0}(x))=0\quad  t\in[\epsilon-\lambda,\lambda-\epsilon]\  x\in M,$$
and consequently that 
$$a(x,v_{t,0}(x))=0\quad  t\in[\epsilon-\lambda,\lambda-\epsilon]\ x\in M.$$
Thus, $v_{t,0}$, $t\in[\epsilon-\lambda,\lambda-\epsilon]$ solves the problem
$$
\left\{
\begin{array}{ll}
-\Delta_g v_{t,0}=0  & \mbox{in}\ M ,\\
v_{t,0}= t &\mbox{on}\ \partial M,
\end{array}
\right.$$
which implies that $v_{t,0}\equiv t$. Therefore, we have
$$a(x,t)=0\quad  t\in[\epsilon-\lambda,\lambda-\epsilon]\ x\in M$$
and sending $\epsilon\to0$, we obtain \eqref{t2c}.
\subsection{Proof of Theorem \ref{t3}}

Fix $\lambda\in(0,\lambda_1]$ and suppose that \eqref{t2b} is fulfilled. For any   $t\in[-\lambda,\lambda]$ and $h\in B_1$ we denote by $v_{t,h}$ and $w_{t,h}$ respectively the solutions of 
$$
\left\{
\begin{array}{ll}
-\Delta_g v_{t,h}+a(x,v_{t,h})=0  & \mbox{in}\ M ,\\
v_{t,h}= th&\mbox{on}\ \partial M,
\end{array}
\right.$$
$$
\left\{
\begin{array}{ll}
-\Delta_g w_{t,h}+qw_{t,h}=0  & \mbox{in}\ M ,\\
w_{t,h}= th&\mbox{on}\ \partial M,
\end{array}
\right.$$
In view of \eqref{t2b}, we have
$$\partial_\nu v_{t,h}(x)=\partial_\nu w_{t,h}(x),\quad t\in[-\lambda,\lambda],\ h\in B_1,\ x\in\partial M.$$
Differentiating the above identity with respect to $t$, we obtain
\bel{t3e} \partial_\nu v_{t,h}^{(1)}(x)=\partial_\nu w_h^{(1)}(x),\quad t\in[-\lambda,\lambda],\ \ h\in B_1,\ x\in\partial M,\ee
with $v_{t,h}^{(1)}=\partial_tv_{t,h}$ and  $w_h^{(1)}=\partial_tw_{t,h}$. On the other hand, by the first order linearization, one can check that $v_{t,h}^{(1)}$ and $w_h^{(1)}$ are respectively the solutions of 
$$
\left\{
\begin{array}{ll}
-\Delta_g v_{t,h}^{(1)}+\partial_\mu a(x,v_{t,h})v_{t,h}^{(1)}=0  & \mbox{in}\ M ,\\
v_{t,h}^{(1)}= h&\mbox{on}\ \partial M,
\end{array}
\right.$$
$$
\left\{
\begin{array}{ll}
-\Delta_g w_h^{(1)}+qw_h^{(1)}=0  & \mbox{in}\ M ,\\
w_h^{(1)}= h&\mbox{on}\ \partial M.
\end{array}
\right.$$
In view of \cite[Theorem 3]{DKSU}, condition \eqref{t3e} implies that
\bel{t3f}\partial_\mu a(x,v_{t,h}(x))=q(x),\quad x\in M.\ee
In particular $w_h^{(1)}$ and $v_{t,h}^{(1)}$ solve the same  boundary value problem and the uniqueness of such solutions, guaranteed by condition \eqref{cond2}, implies that $w_h^{(1)}=v_{t,h}^{(1)}$. Then, condition \eqref{cond1} implies that
$$v_{t,h}=\int_0^tv_{\tau,h}^{(1)}d\tau=t w_h^{(1)}$$
and from \eqref{t3f} we deduce that
\bel{t3g}\partial_\mu a(x,t w_h^{(1)}(x))=q(x),\quad x\in M.\ee
Since $q$ is non-negative, fixing $h\equiv1$ and applying the  maximum principle, the Harnack inequality and unique continuation results for elliptic equations, we deduce that 
$$c=\inf_{x\in M}w_h^{(1)}(x)>0.$$
Combining this with \eqref{t3g}, we deduce \eqref{t2c}.

\subsection{Proof of Theorem \ref{t4}}

Fix $\lambda\in(0,\lambda_1]$ and suppose that \eqref{t4b} is fulfilled. For any $\epsilon\in (0,\lambda)$,  $t\in[\epsilon-\lambda,\lambda-\epsilon]$, $(s_1,s_2)\in [0,\epsilon/2)^2$ and $h_1,h_2\in B_1$ we fix $s=(s_1,s_2)$ and we denote by $v_{t,s}$ and $w_{t,s}$ respectively the solutions of 
$$
\left\{
\begin{array}{ll}
-\Delta_g v_{t,s}+a_2(x,v_{t,s})=0  & \mbox{in}\ M ,\\
v_{t,s}= t +s_1h_1+s_2h_2&\mbox{on}\ \partial M,
\end{array}
\right.$$
$$
\left\{
\begin{array}{ll}
-\Delta_g w_{t,s}+qw_{t,s}=0  & \mbox{in}\ M ,\\
w_{t,s}= t +s_1h_1+s_2h_2&\mbox{on}\ \partial M.
\end{array}
\right.$$
In view of \eqref{t4b}, we have
$$\partial_\nu v_{t,s}(x)=\partial_\nu w_{t,s}(x),\quad t\in[\epsilon-\lambda,\lambda-\epsilon],\ s\in [0,\epsilon/2)^2,\ h_1,h_2\in B_1,\ x\in\partial M.$$
Differentiating the above identity with respect to $s_1$ and $s_2$ at $s=0$, we obtain
\bel{t4e} \partial_\nu v_{t}^{(2)}(x)=\partial_\nu w_{h}^{(2)}(x),\quad t\in[\epsilon-\lambda,\lambda-\epsilon],\ s\in [0,\epsilon),\ h_1,h_2\in B_1,\ x\in\partial M,\ee
with $v_{t}^{(2)}=\partial_{s_1}\partial_{s_2}v_{t,s}|_{s=0}$ and  $w_{h}^{(2)}=\partial_{s_1}\partial_{s_2}w_{t,s}|_{s=0}$. On the other hand, by the second order linearization, one can check that $w_{h}^{(2)}\equiv0$ and $v_{t,h}^{(2)}$ solves the problem
\bel{t2f}
\left\{
\begin{array}{ll}
-\Delta_g v_{t}^{(2)}+\partial_\mu a_2(x,v_{t,0})v_{t,h}^{(2)}=-\partial_\mu^2 a_2(x,v_{t,0,h})v_{1,t}^{(1)}v_{2,t}^{(1)}  & \mbox{in}\ M ,\\
v_{t}^{(2)}= 0&\mbox{on}\ \partial M,
\end{array}
\right.\ee
with $v_{j,t}^{(1)}$, $j=1,2$, the solution of
$$
\left\{
\begin{array}{ll}
-\Delta_g v_{j,t}^{(1)}+\partial_\mu a_2(x,v_{t,0})v_{j,t}^{(1)}=0  & \mbox{in}\ M ,\\
v_{j,t}^{(1)}= h_j&\mbox{on}\ \partial M,
\end{array}
\right.$$
In view of \eqref{t4e}, we have
$$\partial_\nu v_{t}^{(2)}(x)=0,\quad t\in[\epsilon-\lambda,\lambda-\epsilon],\ s\in [0,\epsilon),\ h_1,h_2\in B_1,\ x\in\partial M.$$
Fixing $v_{3,t}^{(1)}\in H^2(M)$ an arbitrary solution of $-\Delta_g v_{3,t}^{(1)}+\partial_\mu a_2(x,v_{t,0})v_{3,t}^{(1)}=0$,  multiplying \eqref{t2f} by $v_{3,t}^{(1)}$ and integrating by parts, we find
\bel{t4g} \int_M \partial_\mu^2 a_2(x,v_{t,0}(x))v_{1,t}^{(1)}(x)v_{2,t}^{(1)}(x)v_{3,t}^{(1)}(x)dV_g(x)=0.\ee
Using the fact that in the above identity $v_{j,t}^{(1)}$, $j=1,2$, can be seen as arbitrary smooth solutions of the equations $-\Delta_g v+\partial_\mu a_2(x,v_{t,0})v=0$, we are in position to apply \cite[Proposition 6]{FO20} in order to deduce that
$$ \partial_\mu^2 a_2(x,v_{t,0}(x))=0,\quad t\in[\epsilon-\lambda,\lambda-\epsilon],\ x\in  M.$$
Sending $\epsilon\to0$, we deduce that
\bel{t4g}\partial_\mu^2 a_2(x,v_{t,0}(x))=0,\quad t\in[-\lambda,\lambda],\ x\in  M.\ee
Recalling that $ w_t=\partial_t^2v_{t,0}$ solves the problem 
$$\left\{
\begin{array}{ll}
-\Delta_g w_t+\partial_\mu a_2(x,v_{t,0})w_t=-\partial_\mu^2 a_2(x,v_{t,0})(\partial_tv_{t,0,h})^2  & \mbox{in}\ M ,\\
w_{t}= 0&\mbox{on}\ \partial M,
\end{array}
\right.$$
and applying \eqref{t4g}, we deduce that $w_t\in H^1_0(M)\cap H^2(M)$ satisfies \bel{t4cc}-\Delta_g w_t(x)+\partial_\mu a_2(x,v_{t,0}(x))w_t(x)=0,\quad x\in M.\ee In addition, fixing the operators $A=-\Delta_g+\partial_\mu a_2(\cdot,0)$ and $A_t=-\Delta_g+\partial_\mu a_2(\cdot,v_{t,0})$  with domain $D(A)=D(A_t)=H^1_0(M)\cap H^2(M)$ and recalling that 
$$\begin{aligned}&\limsup_{t\to0}\norm{\partial_\mu a_2(\cdot,v_{t,0})-\partial_\mu a_2(\cdot,0)}_{L^\infty(M)}\\
&\leq \limsup_{t\to0}\underset{\mu\in[-\norm{v_{t,0}}_{L^\infty(M)},\norm{v_{t,0}}_{L^\infty(M)}]}{\sup}\norm{\partial_\mu^2 a_2(\cdot,\mu)}_{L^\infty(M)}\norm{v_{t,0}}_{L^\infty(M)}=0,\end{aligned}$$ 
we can deduce from the Min-Max principle and \eqref{t4aa}  that   there exists $\lambda\in(0,\lambda_1]$ such that 
$$0\not\in \sigma(A_t),\quad t\in(-\lambda,\lambda).$$
Combining this with \eqref{t4cc} we deduce that $\partial_t^2v_{t,0}=w_t\equiv 0$, $t\in[-\lambda,\lambda]$. It follows that
$$\partial_tv_{t,0}=\partial_tv_{0,0},\quad t\in[-\lambda,\lambda].$$
Then, \eqref{cond1} implies 
\bel{t4h}v_{t,0}=t\partial_tv_{0,0}.\ee
Recall that $y=\partial_tv_{0,0}$ solves the boundary value problem
$$\left\{
\begin{array}{ll}
-\Delta_g y+\partial_\mu a_2(x,0)y=0  & \mbox{in}\ M ,\\
y= 1&\mbox{on}\ \partial M.
\end{array}
\right.$$
Therefore, condition \eqref{t4aa} combined with the  maximum principle, the Harnack inequality and unique continuation results for elliptic equations imply that 
$$c=\inf_{x\in M}y(x)>0.$$
Combining this with \eqref{t4g}-\eqref{t4h}, we deduce \eqref{t4c}.

\bigskip
\vskip 1cm

\end{document}